\documentclass[12pt,reqno]{article}

\usepackage[usenames]{color}
\usepackage{amssymb}
\usepackage{graphicx}
\usepackage{amscd}
\usepackage{url}
\usepackage{enumerate}
\usepackage{epstopdf}
\usepackage[colorlinks=true,
linkcolor=webgreen,
filecolor=webbrown,
citecolor=webgreen]{hyperref}

\definecolor{webgreen}{rgb}{0,.5,0}
\definecolor{webbrown}{rgb}{.6,0,0}

\usepackage{color}
\usepackage{fullpage}
\usepackage{float}

\usepackage{graphics,amsmath,amssymb}
\usepackage{amsthm}
\usepackage{amsfonts}
\usepackage{latexsym}

\usepackage{multirow}
\usepackage{algorithm}

\usepackage[noend]{algpseudocode}

\usepackage{booktabs} 
\usepackage{array} 

\setlength{\textwidth}{6.5in}
\setlength{\oddsidemargin}{.1in}
\setlength{\evensidemargin}{.1in}
\setlength{\topmargin}{-.5in}
\setlength{\textheight}{8.9in}

\newcommand{\seqnum}[1]{\href{http://oeis.org/#1}{\underline{#1}}}

\newcommand{\ra}[1]{\overrightarrow{#1}}
\newcommand{\la}[1]{\overleftarrow{#1}}

\begin{document}

\theoremstyle{plain}
\newtheorem{theorem}{Theorem}
\newtheorem{corollary}[theorem]{Corollary}
\newtheorem{lemma}[theorem]{Lemma}
\newtheorem{proposition}[theorem]{Proposition}
\newtheorem{conjecture}[theorem]{Conjecture}
\newtheorem{experiment}[theorem]{Experiment}
\newtheorem{remark}[theorem]{Remark}

\title{Extrema Property of the $k$-Ranking of Directed Paths and Cycles}
\author{Breeanne Baker Swart
\thanks{ Corresponding address:\ Department of Mathematics and Computer Science, The Citadel,
                Charleston, SC, 29409, Email:\ breeanne.swart@citadel.edu}\\
                The Citadel\\
                Charleston, SC, 29409
\and
Rigoberto Fl\'{o}rez
\thanks{The last and the first two authors were partially supported by Citadel Foundation.}
\thanks{ Corresponding address:\ Department of Mathematics and Computer Science, The Citadel,
                Charleston, SC, 29409, Email:\ rflorez1@citadel.edu}\\
                The Citadel\\
                Charleston, SC, 29409  \\
\and Darren A. Narayan
\thanks{Corresponding
address:\ School of Mathematical Sciences, Rochester Institute of
Technology, Rochester, NY 14623-5604,
Email:\ darren.narayan@rit.edu}\\
Rochester Institute of Technology\\
Rochester, NY 14623-5604\\
\and
George L. Rudolph
\thanks{ Corresponding address:\ Department of Mathematics and Computer Science, The Citadel,
                Charleston, SC, 29409, Email:\ george.rudolph@citadel.edu}\\
                The Citadel\\
                Charleston, SC, 29409  \\
} \maketitle

\begin{abstract}
A $k$-ranking of a directed graph $G$ is a labeling of the vertex set of $G$ with $k$ positive integers such
that every directed path connecting two vertices with the same label includes a vertex with a
larger label in between. The \textit{rank number of} $G$ is defined to be the smallest $k$ such that $G$
has a $k$-ranking. We find the largest possible directed graph that can be obtained from
a directed path or a directed cycle by attaching new edges to the vertices
such that the new graphs have the same rank number as the original graphs.  The adjacency matrix of the resulting graph is embedded in the Sierpi\'nski triangle.

We present a connection between the number of edges that can
be added to paths and the Stirling numbers of the second kind.  These results are generalized to create directed
graphs which are unions of directed paths and directed cycles that maintain the rank number of a base graph of a directed path or a directed cycle.
\end{abstract}

\section{Introduction}

A vertex coloring of a directed graph is a labeling of its vertices so that no two adjacent vertices
receive the same label. In a directed path, edges are oriented in the same direction. A $k$-ranking of a directed graph is a
labeling of the vertex set with $k$ positive integers such that for every directed path connecting two vertices with the same
label there is a vertex with a larger label in between.  A ranking is \textit{minimal} if the reduction of any label violates the ranking
property. The \textit{rank number} $\chi_{k}(G)$ of a directed graph $G$ is the smallest $k$ such that $G$ has a minimal
$k$-ranking.

It is known that the rank number of a graph $G$ may increase just by adding a new edge, even if the new edge and $G$ share vertices.
This raises the question ``what is the maximum size of a directed graph that satisfies the property that its rank number is equal to
the rank number of its largest directed subpath?'' Fl\'orez and Narayan \cite{floreznarayan, FN}
found results related to this question; however, the problem is still open. We believe that studying particular cases will
lead to a better understanding of the problem and potential solutions.

In this paper we study the necessary and sufficient conditions for the largest possible directed graph that can be obtained
by attaching edges to either a directed path or directed cycle without changing the rank number and the number of vertices.
In \cite{floreznarayan} there is a solution for the undirected case. Here, we analyze cases for which the new directed graph
keeps the rank number of the original graph. The maximum number of edges in such graphs is described as well as which edges are present in the graphs.

We build  families of directed graphs  by adding directed edges (called \textit{admissible}) to a directed path (called the \textit{base}). Those families
satisfy the condition that the graphs are maximal graphs with the property that the rank number of each graph equals the rank number of the base directed path.
The graphs of the first two families described were constructed recursively by adding all admissible edges  (without increasing the number of vertices) to a base directed path.
The same idea is extended to  directed cycles.

We generalize the concept developed to build the first four families above to other maximal families of graphs preserving the rank number of the base directed path by
adding admissible directed paths and directed cycles to a base directed path or directed cycle.

The number of edges and the number of admissible edges of the graphs in the first four families are counted using known numerical sequences. We prove, using the
 recursive construction, that the maximum number of edges in some of those families of graphs is given by a Stirling
number of the second kind.

For those who are interested in computational matters, we provide algorithms, some of which are given in terms of adjacency matrices. We found an interesting connection between one of the adjacency matrices and the Sierpi\'nski triangle. The adjacency matrix of the first graph found in this paper embeds naturally in the Sierpi\'nski sieve triangle.

\section{Preliminary Concepts} \label{sec:preliminaries}

In this section we review some known concepts and results, introduce new definitions, and give a proof for a lemma.

Let $V := \{v_{1},v_{2},\ldots,v_{\eta}\}$ be a set of vertices of a directed graph. An edge (arc) with vertices
$\{v_{i}, v_{j}\}\subseteq V$, $i < j$, with orientation $v_{i}\rightarrow v_{j}$ is denoted by $\overrightarrow{e}$ or by $\overrightarrow{v_{i}v_{j}}$,
and the edge with orientation $v_{i}\leftarrow v_{j}$ is denoted by $\overleftarrow{e}$ or by $\overleftarrow{v_{i}v_{j}}$. A directed path with vertices
$V$ is denoted by $\overrightarrow{P_{\eta}}$ if its edges are of the form $\overrightarrow{e}$.
A directed cycle with vertices $V$ is denoted by $\overrightarrow{C_{\eta}}$ if its edges are of the form $\overrightarrow{e}$.

Let $G$ be a finite directed graph with vertex set $V(G)$. A $k$-\emph{ranking} of $G$ is a labeling (or coloring) of $V(G)$
with $k$ positive integers such that every directed path that connects two vertices of the same label (color) contains a vertex
of a larger label (color). Thus, a labeling function  $f:V(G)\longrightarrow\{1,2,\ldots,k\}$ is a vertex $k$-\textit{ranking} of
$G$ if for all $u,v \in V(G)$ such that $u\not = v$ and $f(u)=f(v)$, then every  directed path connecting $u$ and $v$  contains a
vertex $w$ such that $f(w)>f(u).$ Like the chromatic number, the \textit{rank number} of a graph $G$ is
defined to be the smallest $k$ such that $G$ has a minimal $k$-ranking; it is denoted by $\chi_{k}(G)$.

Let $H_1$ and $H_2$ be directed graphs with $V(H_1)\subseteq V(H_2)$ and $E(H_1)\cap E(H_2)=\emptyset$.
We say that a directed edge $e \in E(H_1)$ is \emph{admissible} for $H_2$ if
$\chi_{r}\left(  H_2\cup\{e\}\right) = \chi_{r}(H_2)$, and $e$ is \emph{forbidden} for $H_2$ if $\chi_{r}(H_2\cup\{e\})>\chi_{r}(H_2)$.

We distinguish two types of admissible edges. A directed edge $e$ is \emph{admissible of type I} for $G$ if $e$ and the edges in the edge
set of $G$ have the same direction. A directed edge $e$ is \emph{admissible of type II} if $e$ and the edges in the edge set
of $G$ have opposite direction.   Note that admissible edges of type II allow for edges with opposite directions between two vertices.

For example, Figure \ref{figure1} shows the graph
$\overrightarrow{\mathcal{G}}_{4}:=\overrightarrow{P}_{2^4-1} \cup H(\overrightarrow{\mathcal{G}}_{4})$
where $H(\overrightarrow{\mathcal{G}}_{4})$ is the graph formed with all admissible edges of type I for
$\overrightarrow{P}_{2^4-1}$. In Figure \ref{figure2} we show the graph
$\overleftarrow{\mathcal{G}}_{4}:=\overrightarrow{P}_{2^4-1} \cup H(\overleftarrow{\mathcal{G}}_{4})$ where $H(\overleftarrow{\mathcal{G}}_{4})$ is the graph formed
with all admissible edges of type II for $\overrightarrow{P}_{2^4-1}$.  Since $\overrightarrow{P}_{2^4-1}$ gives rise
to both graphs $\overrightarrow{\mathcal{G}}_{4}$ and $\overleftarrow{\mathcal{G}}_{4}$, they have the same set of vertices $V= \{v_1, \ldots, v_{15} \}$, where $v_1$ is
leftmost vertex and $v_{15}$ is the rightmost vertex. The numbers on the graphs represent the labelings. That is,
\begin{table} [!h]
\small
\begin{center}
    \begin{tabular}{llllll}
        $f(v_1)=1$;   \quad  & $f(v_2)=2$;  \quad & $f(v_3)=1$;   \quad   & $f(v_4)=3$;   \quad  & $f(v_5)=1$;      \\
        $f(v_6)=2$;          & $f(v_7)=1$;        & $f(v_8)=4$;           &$f(v_9)=1$;           & $f(v_{10}=2$;    \\
        $f(v_{11})=1$;       & $f(v_{12})=3$;     &$f(v_{13})=1$;         & $f(v_{14})=2$;       & $f(v_{15})=1$ .  \\
    \end{tabular}
\end{center}
\end{table}

The largest label in  $\overrightarrow{P}_{2^4-1}$ is $4$. So, $4$ is the rank number of the graphs $\overrightarrow{P}_{2^4-1}$,
$\overrightarrow{\mathcal{G}}_{4}$, and $\overleftarrow{\mathcal{G}}_{4}$. Thus, $\chi_r(\overrightarrow{\mathcal{G}}_{4})= \chi_r(\overleftarrow{\mathcal{G}}_{4})=\chi_r(\overrightarrow{P}_{2^4-1})=4$.

Bodlaender et al. \cite{bodlaender} determined the rank number of a path $P_{\eta}$ to be $\left\lfloor \log_{2} \eta \right\rfloor
+1$. Bruoth and M. Hor\v{n}\'{a}k \cite{bruoth} found a similar value, $\left\lfloor\log_2(\eta-1)\right\rfloor+2$, for $C_{\eta}$. It is easy to
see that these two properties are true for both $\chi_{r}(\overrightarrow{P}_{\eta})$ and $\chi_{r}(\overrightarrow{C}_{\eta})$, respectively.
A minimal ranking for these two types of graphs  can be obtained by labeling the vertices $\{v_{i}$ $|$ $1\leq i\leq \eta\}$
with $1, \ldots ,\alpha+1$ where $2^{\alpha}$ is the highest power of $2$ that divides $i$. If $\eta=2^{\alpha}-1$, then the minimal ranking of
$\chi_{r}(\overrightarrow{P}_{\eta})$ is unique. If $\eta=2^{\alpha}$, then the minimal ranking of $\overrightarrow{C}_{\eta}$ is unique.
These two rankings are called the \textit{standard minimal rankings}.  The following lemma summarizes these properties.

\begin{lemma}[\cite{bodlaender,bruoth}] \label{rankingofapath}
 If $\Gamma$ is any of the graphs $\overrightarrow{P}_{2^k-1}$
 or $\overrightarrow{C}_{2^k}$, then $\Gamma$ has a unique minimal ranking and

\[  \chi_k(\Gamma) =
        \left\{
            \begin{array}{ll}
                k, & \hbox{if  \; $\Gamma=\overrightarrow{P}_{2^k-1}$ }\\
                k+1, & \hbox{if \; $ \Gamma=\overrightarrow{C}_{2^k}$.}
            \end{array}
        \right.
\]
\end{lemma}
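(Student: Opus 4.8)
The plan is to reduce both formulas to the cited undirected results and then to prove uniqueness by a splitting induction. The first observation is that the orientations do not affect the ranking condition here: in $\overrightarrow{P}_{2^k-1}$ the only directed path joining two vertices $v_i,v_j$ with $i<j$ is the forward segment $v_i\to v_{i+1}\to\cdots\to v_j$, and in $\overrightarrow{C}_{2^k}$ the directed paths joining $v_i$ and $v_j$ are precisely the two arcs of the cycle. Hence the requirement ``every directed path connecting two equally labeled vertices contains a larger label'' is identical to the undirected one, so $\chi_{r}(\overrightarrow{P}_{\eta})=\lfloor\log_2\eta\rfloor+1$ and $\chi_{r}(\overrightarrow{C}_{\eta})=\lfloor\log_2(\eta-1)\rfloor+2$ by \cite{bodlaender} and \cite{bruoth}. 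Substituting $\eta=2^k-1$ gives $(k-1)+1=k$, and $\eta=2^k$ gives $(k-1)+2=k+1$, which is the displayed formula.

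For uniqueness on the path I would prove by strong induction on $k$ that the only minimal $k$-ranking (labels in $\{1,\dots,k\}$) of $\overrightarrow{P}_{2^k-1}$ is the standard one $f(v_i)=\alpha+1$, where $2^{\alpha}$ is the highest power of two dividing $i$. The base case $k=1$ is a single vertex labeled $1$. For the inductive step, let $f$ be a minimal $k$-ranking. The label $k$ must occur, otherwise $f$ uses fewer than $k$ labels, contradicting $\chi_r=k$; and it occurs at most once, since two vertices with the maximal label $k$ would have no larger label on the segment between them. Let $v_m$ be the unique vertex with $f(v_m)=k$. Deleting it leaves the subpaths on $v_1,\dots,v_{m-1}$ and $v_{m+1},\dots,v_{2^k-1}$, each labeled from $\{1,\dots,k-1\}$; since every directed path through $v_m$ already contains the label $k$, both validity and irreducibility of $f$ restrict to each subpath separately. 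Each subpath therefore carries a $(k-1)$-ranking, so the formula forces $m-1\le 2^{k-1}-1$ and $2^k-1-m\le 2^{k-1}-1$, whence $m=2^{k-1}$ and the maximum sits exactly at the center. The two halves are copies of $\overrightarrow{P}_{2^{k-1}-1}$ with minimal $(k-1)$-rankings, identified with the standard ranking by the inductive hypothesis. Finally, because the highest power of two dividing $2^{k-1}+j$ equals that dividing $j$ for $0<j<2^{k-1}$, the re-indexed right-hand labels agree with $f(v_i)=\alpha+1$, and $f(v_{2^{k-1}})=(k-1)+1=k$; thus $f$ is the standard ranking.

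For the cycle the same mechanism applies after one reduction. In a minimal $(k+1)$-ranking of $\overrightarrow{C}_{2^k}$ the top label $k+1$ occurs exactly once: it must occur, else the labeling is a $k$-ranking, which is impossible as $\chi_r=k+1$; and it cannot occur twice, since both arcs between two such vertices would then lack a larger label. Deleting that vertex turns the cycle into $\overrightarrow{P}_{2^k-1}$, and for any two remaining vertices the arc avoiding the deleted vertex is exactly the segment joining them in this path, while the other arc carries the label $k+1$. Hence the induced labeling is a valid, irreducible $k$-ranking of $\overrightarrow{P}_{2^k-1}$, which by the path case is the standard ranking. Reading the labels around the cycle then produces a single cyclic pattern ending in $k+1$, determined only by the position of the maximal label, i.e.\ unique up to the rotational automorphism of $\overrightarrow{C}_{2^k}$ (with the standard representative placing $k+1$ at $v_{2^k}$); this is the asserted uniqueness.

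I expect the two structural claims driving the induction to be the crux: that the maximal label occurs exactly once and that its position is forced to the center. The first is immediate from the ranking axiom, but the second needs the rank-number formula applied to each half together with the observation that irreducibility of $f$ descends to the subpaths because every path through the maximal vertex is automatically separated. Once these are secured, the matching of the re-indexed labels with $\alpha+1$ is the elementary $2$-adic identity above, and the cycle statement is a one-step reduction to the path.
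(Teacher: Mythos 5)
Your proof is correct, but it is worth noting that the paper does not actually prove this lemma: it is stated as a summary of known results, with the rank-number formulas cited from Bodlaender et al.\ and Bruoth--Hor\v{n}\'{a}k, the transfer to the directed setting dismissed as ``easy to see,'' and uniqueness of the standard ranking simply asserted. Your argument supplies all three missing pieces. The reduction to the undirected case (the only directed path in $\overrightarrow{P}_{\eta}$ joining two vertices is the forward segment, and the directed paths joining two vertices of $\overrightarrow{C}_{\eta}$ are exactly the two arcs) is precisely the observation the paper gestures at, and your substitutions $\eta=2^k-1$ and $\eta=2^k$ recover the displayed formulas. What you add beyond the paper is the inductive uniqueness proof: the top label occurs exactly once; the rank formula applied to the two halves forces that label to the exact center $v_{2^{k-1}}$; the restrictions to the halves inherit both validity and irreducibility, because every path through the center vertex already carries the top label (this descent of irreducibility is the one step that genuinely needs the argument you give, and you give it); and the identity that the highest power of $2$ dividing $2^{k-1}+j$ equals the one dividing $j$ for $0<j<2^{k-1}$ matches the re-indexed labels with the standard ranking. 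The cycle case then reduces to the path case by deleting the unique vertex labeled $k+1$, as you say. One caveat: for $\overrightarrow{C}_{2^k}$ uniqueness can only hold up to the cycle's rotational automorphisms, which you correctly flag but the paper states without qualification. In short, your route is compatible with the paper's but self-contained, whereas the paper buys brevity by outsourcing everything to the cited literature.
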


\begin{figure} 
    \begin{center}
        \includegraphics[width=160mm]{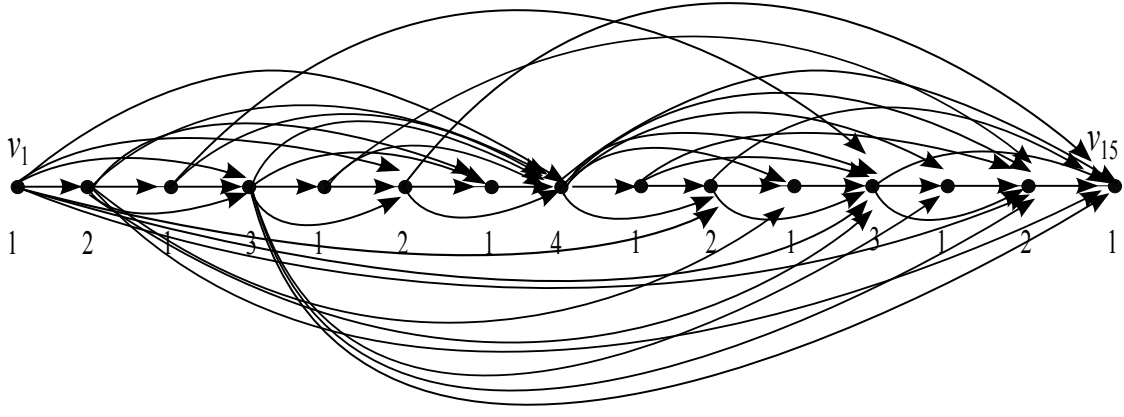}
    \end{center}
    \caption{A graph with admissible edges of type I.}
    \label{figure1}
\end{figure}

\begin{figure} 
    \begin{center}
        \includegraphics[width=160mm]{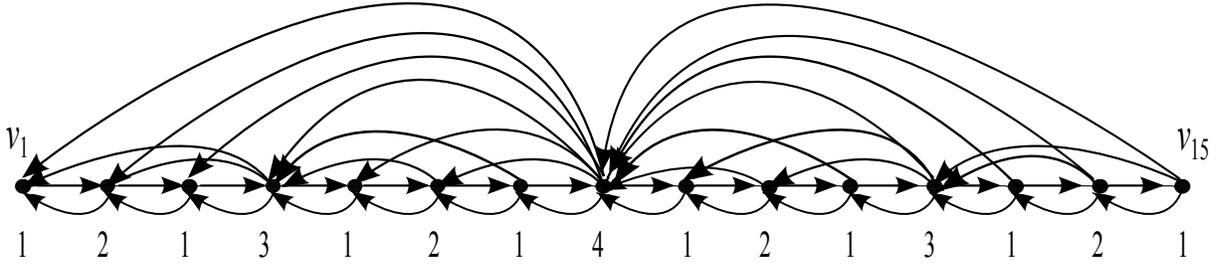}
    \end{center}
    \caption{A graph with admissible edges of type II.} \label{figure2}
\end{figure}

We now give some definitions that are going to be used in the results that follow. Let $G$ be a directed graph with
$f$ as its $k$-ranking function. We define $A_j= \{v \in V(G)\mid f(v)\ge j\}$ and use $\mathcal{C} (A_j)$ to
denote the set of all components of $G \setminus A_j$ where $0<j\le \chi_k(\Gamma)$.

Let $\overrightarrow{\mathcal{G}}_2:= \overrightarrow{P}_{3}$ with $V(\overrightarrow{\mathcal{G}}_2)=\{v_1, v_2, v_{3}\}$, and let
$\overrightarrow{\mathcal{G}}_2^{'}:= \overrightarrow{P^{'}}_{3}$ with $V(\overrightarrow{\mathcal{G}}_2^{'})=\{w_1, w_2, w_{3}\}$.
We define the \emph{direct sum graph of type I}  recursively, denoted by
$\overrightarrow{\mathcal{G}}_{i+1} =\overrightarrow{\mathcal{G}}_i \oplus \overrightarrow{\mathcal{G}}_i^{'}$,  as the graph with vertex set
\[
    V(\overrightarrow{\mathcal{G}}_{i+1})=V(\overrightarrow{\mathcal{G}}_i) \cup V(\overrightarrow{\mathcal{G}}_i^{'}) \cup \{v_{2^{i}}\}
\]
 where $V(\overrightarrow{\mathcal{G}}_i) =\{v_1, v_2, \ldots, v_{2^i-1}\}$ and  $\overrightarrow{\mathcal{G}}_i^{'}$ is the
 graph obtained from a copy of $\overrightarrow{\mathcal{G}}_i$ by  relabeling the vertices of $\overrightarrow{\mathcal{G}}_i$
 as follows $w_t:= v_t$ for $t=1, \ldots, 2^i-1$. That is,
 $V(\overrightarrow{\mathcal{G}}_i^{'}) =\{w_1, \ldots, w_{2^i-1}\}$. The edge set of  $\overrightarrow{\mathcal{G}}_{i+1}$ is
\[
    E(\overrightarrow{\mathcal{G}}_{i+1})=E(\overrightarrow{\mathcal{G}}_i) \cup E(\overrightarrow{\mathcal{G}}_i^{'})\cup E(\ra{H}_{i})
    \cup \{\overrightarrow{v_{2^i-1}v_{2^{i}}}, \overrightarrow{v_{2^{i}}w_1} \},
\]
where
\begin{equation}\label{amisibles:edges:Hi}
\ra{H}_{i}= \{\overrightarrow{v_jv_{2^{i}}}| 1 \leq j \le 2^i-2 \}
\cup \{\overrightarrow{v_{2^{i}}w_j} | 2\le  j \leq 2^i-1\}
 \cup \{\overrightarrow{v_kw_j}| \overrightarrow{v_kv_j} \in E(\overrightarrow{\mathcal{G}}_i)\}.
 \end{equation}
For example, $\overrightarrow{\mathcal{G}}_4$ is depicted in Figure \ref{figure1}.

We use $H(\overrightarrow{\mathcal{G}}_{t})$ to denote the graph with the set of vertices equal to the set of vertices of $\overrightarrow{\mathcal{G}}_{t}$
and the set of edges defined by
\begin{equation}\label{amisibles:edges:Pn}
E(H(\overrightarrow{\mathcal{G}}_{t})):=\bigcup_{j=3}^{t}\ra{H}_{j}.
\end{equation}

\section{Admissible Edges of Type I } \label{sec:admissible_edges1}

This section proves several properties of the direct sum graph of type I. Assume that all admissible edges and direct sum graphs considered throughout
this section are of type I.  The path $\overrightarrow{P}_{2^n-1}$  gives rise to the direct sum graph $\overrightarrow{\mathcal{G}}_{n}$, and they both preserve many
properties, including the symmetry described below.  We prove that $\overrightarrow{\mathcal{G}}_{n}$ is the largest graph that shares with $\overrightarrow{P}_{2^n-1}$ the
set of vertices, the orientation, the $n$-rank number, and the same vertex labeling when the labeling is minimum.  Note that
$\overrightarrow{\mathcal{G}}_{n}$ is $\overrightarrow{P}_{2^{n}-1} \cup H(\overrightarrow{\mathcal{G}}_{n-1})$. We prove that  $H(\overrightarrow{\mathcal{G}}_{n-1})$
is the set of admissible edges for $\overrightarrow{P}_{2^{n}-1}$.

Consider the symmetry seen on the graph $\la{\mathcal{G}}_4$, (see Figure \ref{figure2}).  This symmetry occurs in general in $\overleftarrow{\mathcal{G}}_n$
(see Section \ref{sec:admissible_edges2}).  However, the symmetry in the graph $\overrightarrow{\mathcal{G}}_4$ is not obvious but does exist (see Figure \ref{figure1}).
In Section \ref{sec:matrices}, we give a recursive algorithm to build the adjacency matrix that represents $\overrightarrow{\mathcal{G}}_{n}$.  The adjacency matrix given by
Algorithm \ref{alg:make_matrix} is symmetric with respect to the antidiagonal (for example, see the matrices in Table \ref{example_matrices}).  This symmetry can be predicted
based on how the direct sum graph $\ra{\mathcal{G}}_n$ is defined.

Recall that the \emph{Stirling numbers of the second kind} $S (n,k)$ count the ways to divide a set of $n$ objects into $k$ nonempty subsets where $n, k  \ge 1$.  We are interested
in the following Stirling numbers, $S(n+1,3)=(1/6)(3^{n+1}-3(2^{n+1})+3)$ and $S(n,2)=2^{n-1}-1$.  We prove that the number of edges of $\overrightarrow{\mathcal{G}}_{n}$ and the
number of admissible edges for $\ra{P}_{2^n-1}$ can be described by Stirling numbers of the second kind.

\begin{proposition} \label{lemma:recursiveproperty}  If  $n\geq 2$ and $\overrightarrow{\mathcal{G}}_{n}$ is the direct sum graph of type I, then

\begin{enumerate}
\item $\chi_{r}(\overrightarrow{\mathcal{G}}_{n})=\chi_{r}(\overrightarrow{P}_{2^{n}-1}) =n$ and the minimum labeling of $\overrightarrow{\mathcal{G}}_{n}$ is unique,

 \item  the total number of edges in $\overrightarrow{\mathcal{G}}_{n}$ is $2S(n+1,3)=3^{n}-2^{n+1}+1$,

 \item an edge $\overrightarrow{e}$ is admissible of type I for $\overrightarrow{P}_{2^n-1}$ if and and only if
 $\overrightarrow{e} \in H(\overrightarrow{\mathcal{G}}_{n})$, where  $H(\overrightarrow{\mathcal{G}}_{n})$ is as in (\ref{amisibles:edges:Pn}), and

\item the total number of admissible edges of type I for $\overrightarrow{P}_{2^n-1}$  is $2\left(S(n+1,3)-S(n,2)\right)$.

\end{enumerate}
\end{proposition}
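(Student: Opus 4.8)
The plan is to prove parts (1) and (3) together by strong induction on $n$, and then to read off (2) from a short recurrence and (4) by subtracting the path edges. The device that makes everything uniform is the uniqueness in Lemma~\ref{rankingofapath}: since $\overrightarrow{P}_{2^n-1}$ has a unique $n$-ranking (the forced placement of the label $n$ at the center, applied recursively, shows the standard ranking $f$ is the only $n$-ranking, not merely the only minimal one), a type-I edge $\overrightarrow{e}$ that is not already a path edge is admissible for $\overrightarrow{P}_{2^n-1}$ if and only if $f$ remains a valid ranking of $\overrightarrow{P}_{2^n-1}\cup\{\overrightarrow{e}\}$: a smaller ranking is impossible, and any $n$-ranking of the enlarged graph restricts to $f$ on the spanning path. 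Thus I only ever have to test the ranking condition on the new directed paths created by inserting one edge. Moreover, since $\overrightarrow{P}_{2^n-1}\subseteq\overrightarrow{\mathcal{G}}_n$ and a ranking of a graph restricts to a ranking of every subgraph, $\chi_r$ is monotone under edge addition; this gives the easy direction of (3) at once, because every $\overrightarrow{e}\in H(\overrightarrow{\mathcal{G}}_n)$ satisfies $n=\chi_r(\overrightarrow{P}_{2^n-1})\le\chi_r(\overrightarrow{P}_{2^n-1}\cup\{\overrightarrow{e}\})\le\chi_r(\overrightarrow{\mathcal{G}}_n)=n$ by part~(1).

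For part~(1) the base case $n=2$ is Lemma~\ref{rankingofapath}. In the step, $\chi_r(\overrightarrow{\mathcal{G}}_{n+1})\ge n+1$ because $\overrightarrow{P}_{2^{n+1}-1}$ is a subgraph, and for the reverse inequality I verify that $f$, with $f(v_{2^n})=n+1$ at the unique center, is valid on $\overrightarrow{\mathcal{G}}_{n+1}$. A directed path either passes through the center $v_{2^n}$ and so meets the global maximum label, or stays inside one copy, where the inductive hypothesis applies, or crosses exactly once from the first copy to the second through a cross edge $\overrightarrow{v_kw_j}$ (there are no edges out of the second copy back toward the first or the center). In the crossing case I project the path to $\overrightarrow{\mathcal{G}}_n$ by replacing $\overrightarrow{v_kw_j}$ and every subsequent $w$-vertex by $\overrightarrow{v_kv_j}$ and the corresponding $v$-vertex; since the cross edges mirror the internal edges and $f(w_t)=f(v_t)$, this projection is a directed path of $\overrightarrow{\mathcal{G}}_n$ joining two equally labeled endpoints, so the inductive hypothesis supplies an interior vertex of larger label whose preimage lies strictly between the original endpoints. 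Uniqueness is then immediate: every vertex of $\overrightarrow{\mathcal{G}}_{n+1}$ lies on the spanning path $\overrightarrow{P}_{2^{n+1}-1}$, so any $(n+1)$-ranking restricts to the unique ranking of that path and hence equals $f$.

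Part~(3), the heart of the statement, is proved by induction, classifying a candidate non-path edge $\overrightarrow{v_av_b}$ according to the center $c=2^{n-1}$. If both endpoints lie in one half the edge is confined to a copy of $\overrightarrow{\mathcal{G}}_{n-1}$; any new directed path that leaves the half must cross the center and so meets the maximum label $n$, which makes admissibility for $\overrightarrow{P}_{2^n-1}$ equivalent to admissibility for the sub-path, and the inductive hypothesis closes this case. If $\overrightarrow{v_av_b}$ is incident to $v_c$ it is one of the edges $\overrightarrow{v_jv_c}$ or $\overrightarrow{v_cw_j}$, all of which lie in $H(\overrightarrow{\mathcal{G}}_n)$ and are admissible because every path through $v_c$ carries the unique maximum label. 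The decisive case is when $\overrightarrow{v_av_b}$ straddles the center, $a<c<b$; writing $b'=b-c$ and using $f(v_{c+t})=f(v_t)$ for $1\le t\le c-1$, a path using this edge bypasses $v_c$, and the inequality expressing that $f$ fails on it is \emph{literally the same} (over the same label sets, with $p\le a$ and $q'\ge b'$ satisfying $f(p)=f(q')$) as the inequality expressing that adding $\overrightarrow{v_av_{b'}}$ destroys the standard ranking of $\overrightarrow{P}_{c-1}$. Hence the straddling edge is admissible if and only if $\overrightarrow{v_av_{b'}}\in E(\overrightarrow{\mathcal{G}}_{n-1})$, which is exactly the defining condition for the cross edges of $\ra{H}_{n-1}$; isolating this correspondence (the shift $b\mapsto b-c$ together with the bad-scenario equivalence) is the step I expect to be the main obstacle to state cleanly. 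When instead $a\ge b'$ there is no forward edge $\overrightarrow{v_av_{b'}}$ and the straddling edge is forbidden: choosing $p=q'$ to be the position of maximal label in the interval $[b',a]$ yields a directed path across the new edge whose interior labels are all strictly smaller, violating the ranking; this relies on the elementary fact, which I would record as a preliminary observation, that the standard ranking attains a strict maximum at a single vertex of every interval.

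Finally, for part~(2) the construction exhibits $E(\overrightarrow{\mathcal{G}}_{n+1})$ as a disjoint union of the edges of two copies of $\overrightarrow{\mathcal{G}}_n$, the set $\ra{H}_n$, and the two new path edges; counting $|\ra{H}_n|=2^{n+1}-4+|E(\overrightarrow{\mathcal{G}}_n)|$ gives the recurrence $|E(\overrightarrow{\mathcal{G}}_{n+1})|=3\,|E(\overrightarrow{\mathcal{G}}_n)|+2^{n+1}-2$, which with $|E(\overrightarrow{\mathcal{G}}_2)|=2$ solves to $3^{n}-2^{n+1}+1=2S(n+1,3)$ by a one-line induction. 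Part~(4) is then immediate from (2) and (3): the admissible edges are precisely $H(\overrightarrow{\mathcal{G}}_n)$, so their number equals $|E(\overrightarrow{\mathcal{G}}_n)|$ minus the $2^n-2$ path edges, namely $3^{n}-3\cdot 2^{n}+3=2\bigl(S(n+1,3)-S(n,2)\bigr)$.
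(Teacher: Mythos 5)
Your proof is correct, and on parts (1), (2), and (4) it is essentially the paper's argument: the same inductive verification that the standard labeling survives in $\overrightarrow{\mathcal{G}}_{k+1}$ (your ``projection'' of a crossing path onto $\overrightarrow{\mathcal{G}}_k$ is the paper's observation that the subgraph induced by $\{v_1,\ldots,v_i,w_j,\ldots,w_{2^k-1}\}$ inherits the labeling of an induced subgraph of $\overrightarrow{\mathcal{G}}_k$), the same recurrence $a_{k+1}=3a_k+2(2^k-1)$, and the same subtraction of the $2^n-2$ path edges. Part (3) is where you take a genuinely different route. The paper does not induct on $n$ there: given an admissible edge with endpoints $u,v$ and $f(v)<f(u)=j$, it analyzes the components of $\overrightarrow{\mathcal{G}}_{n}\setminus A_{j+1}$, each an interval of $2^j-1$ vertices with its unique label-$j$ vertex at its center, and shows either that $u$ is that center (so $\ra{e}\in\ra{H}_{j}$) or that $v$ lies on the forced side of the max-label vertex $w$ of its component, which is exactly the cross-edge condition in (\ref{amisibles:edges:Hi}). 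You instead induct on $n$, classify edges against the global center $c=2^{n-1}$, and convert the straddling case into admissibility of the shifted edge $\overrightarrow{v_av_{b-c}}$ for $\overrightarrow{P}_{2^{n-1}-1}$ via the mirror identity $f(v_{c+t})=f(v_t)$. The two arguments are cousins---the paper's level-$j$ component is what your recursion reaches after $n-j$ halvings---but yours makes explicit a point the paper uses silently: to pass from ``admissible'' to ``the standard ranking survives the new edge,'' one needs that the standard ranking is the \emph{only} $n$-ranking of $\overrightarrow{P}_{2^n-1}$ among all $n$-rankings, not merely the unique minimal one asserted in Lemma \ref{rankingofapath}, since a priori the augmented graph could admit some other ranking with $n$ labels; your restriction-to-the-spanning-path device supplies exactly this, whereas the paper's ``that is a contradiction because $\overrightarrow{e}$ is admissible'' in its Cases 1 and 2 presupposes it. The price of your route is extra bookkeeping that the component formulation avoids: you must separately dispose of the forbidden subcase $a\ge b-c$ (your unique-interval-maximum observation does this correctly) and prove the strengthened uniqueness claim, while the paper gets the left/right dichotomy symmetrically in a few lines.
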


\begin{proof} {\bf Part 1:} The proof is inductive.  Let $T(n)$ be the statement:
$\chi_{r}(\overrightarrow{\mathcal{G}}_{n})=\chi_{r}(\overrightarrow{P}_{2^{n}-1})=n$ for $n>1$ and that $\overrightarrow{\mathcal{G}}_{n}$ and
$\overrightarrow{P}_{2^{n}-1}$ have the same minimal labeling.
For this proof we assume that the labeling of $\overrightarrow{\mathcal{G}}_{n}$ is minimal.

The proof of $T(2)$  is straightforward from the definition of $\overrightarrow{\mathcal{G}}_{2}$. We now suppose that $T(n)$ is true for some fixed $n=k$ with $k>2$. Thus,
$\chi_{r}(\overrightarrow{\mathcal{G}}_{k})=\chi_{r}(\overrightarrow{P}_{2^{k}-1})=k$ is true for some fixed $n=k$ with $k>2$. (We prove $T(k+1)$ is true.)

Consider the graphs $\overrightarrow{P}_{2^k-1}$ and $\overrightarrow{\mathcal{G}}_{k}$, where
$\overrightarrow{\mathcal{G}}_{k}=\overrightarrow{\mathcal{G}}_{k-1} \oplus \overrightarrow{\mathcal{G}}_{k-1}^{'}$.
From the inductive hypothesis and Lemma \ref{rankingofapath} we know that both graphs have $2^{k}-1$ vertices with the same labeling and
that this labeling is minimal and unique. From the definition of  $\overrightarrow{\mathcal{G}}_{k+1}$ we know that its vertices are
$v_1, v_2, \ldots, v_{2^k-1}, v_{2^k}, w_1, w_2, \ldots, w_{2^k-1}$ from left to right. To label $\overrightarrow{\mathcal{G}}_{k+1}$,
we define $f$ as follows: the function $f$ keeps the same labels from $\overrightarrow{\mathcal{G}}_{k}$ for $\{v_1, v_2, \ldots, v_{2^k-1}\}$
and from $\ra{\mathcal{G}}_k^{'}$ for $\{ w_1, w_2, \ldots, w_{2^k-1}\}$ and $f(v_{2^k})=k+1$ since $v_{2^k}$ needs a new label. The function $f$ is a well defined labeling for
$\overrightarrow{\mathcal{G}}_{k+1}$ since $f(v_{2^k}) := k+1$ preserves a good labeling for the edges
\begin{eqnarray*}
    \{\ra{v_iv_{2^k}}, \ra{v_{2^k}w_i} | 1 \leq i \leq 2^k-1\}, 			& &					\{\ra{v_iv_j} | \ra{v_iv_j} \in E(\overrightarrow{\mathcal{G}}_k)\}, \\
    \{\ra{w_iw_j} | \ra{w_iw_j} \in E(\overrightarrow{\mathcal{G}}_k^{'})\},  & \quad \text{and} \quad &\{\ra{v_iw_j} | \ra{v_iv_j} \in E(\overrightarrow{\mathcal{G}}_k)\}.
\end{eqnarray*}
Since one end of each edge in $\{\ra{v_iv_{2^k}}, \ra{v_{2^k}w_i} | 1 \leq i \leq 2^k-1\}$
is labeled with the highest label, these edges are clearly admissible in $\overrightarrow{\mathcal{G}}_{k+1}$.
The edges $\{\ra{v_iv_j} | \ra{v_iv_j} \in E(\overrightarrow{\mathcal{G}}_k)\}$ are admissible in $\overrightarrow{\mathcal{G}}_{k+1}$ since they are admissible in
$\overrightarrow{\mathcal{G}}_k$.  Similarly, the edges $\{\ra{w_iw_j} | \ra{w_iw_j} \in E(\overrightarrow{\mathcal{G}}_k^{'})\}$ are admissible in
$\overrightarrow{\mathcal{G}}_{k+1}$.  The edges $\{\ra{v_iw_j} | \ra{v_iv_j} \in E(\ra{\mathcal{G}}_k\}$ are admissible in $\ra{\mathcal{G}}_{k+1}$ since the subgraph
of $\ra{\mathcal{G}}_{k+1}$ induced by the vertices $\{ v_1, v_2, \ldots, v_i, w_j, w_{j+1}, \ldots, w_{2^k-1} \}$ has the same labeling as the subgraph of $\ra{\mathcal{G}}_k$
induced by $\{v_1, v_2, \ldots, v_i, v_j, v_{j+1}, \ldots, v_{2^k-1}\}$ which has a proper rank labeling.  Note that $f$ is also a minimal labeling for $\overrightarrow{P}_{2^{k+1}-1}$.
This proves $T(k+1)$ is true.

{\bf Part 2:} Let the total number of edges in $\overrightarrow{\mathcal{G}}_{k+1}$ be denoted by $a_{k+1}$.  From the definition of edges of
$\overrightarrow{\mathcal{G}}_{k+1}$ it is easy to see that,
\begin{eqnarray*}
    a_{k+1}&= &|E(\overrightarrow{\mathcal{G}}_{k})| + |E(\overrightarrow{\mathcal{G}}_{k}^{'})| + |\{\ra{v_iv_{2^k}} | 1 \leq i \leq 2^k-2\}| +
                |\{\ra{v_{2^k}w_i} | 2 \leq i \leq 2^k-1\}|  \\
           &  &+\; |\{ \ra{v_iw_j} | \ra{v_iv_j} \in E(\overrightarrow{\mathcal{G}}_k)\}| + |\{\ra{v_{2^k-1}v_{2^k}}, \ra{v_{2^k}w_1}\}| \\
	       &= &a_k + a_k + (2^k-2) + (2^k-2) + a_k + 2 \\
	       &= &3a_k + 2(2^k-1).
\end{eqnarray*}

We prove by induction that the number of edges in $\overrightarrow{\mathcal{G}}_{n}$ is given by $3^n-2^{n+1}+1$ . Let $T(n)$ be the statement:
$a_n=3^n-2^{n+1}+1$ for $n>1$.

We prove $T(2)$. From definition of $\overrightarrow{\mathcal{G}}_{2}$, we have $a_2= 3^2-2^3+1 = 2$. We now suppose that $T(n)$ is true for some fixed $n=k$ with $k>2$.
Therefore, $a_n=3^n-2^{n+1}+1$ is true for some fixed $n=k$ with $k>2$. Thus, $a_{k} = 3^k-2^{k+1}+1$ is true. (We prove $T(k+1)$ is true.)

Since $a_{k+1} = 3a_k + 2(2^k-1)$, we have that
\begin{align*}
a_{k+1} &= 3a_k + 2(2^k-1) \\
	&= 3(3^k-2^{k+1}+1)+2(2^k-1) \\
	&= 3^{k+1}-2^{k+2}+1,
\end{align*}
which shows that $T(k+1)$ holds.  Note that $a_n=3^{n}-2^{n+1}+1$  is twice the Stirling number of second kind. That is,
$\overrightarrow{\mathcal{G}}_{n}$ has $2S(n+1,3)=3^{n}-2^{n+1}+1$ edges.

{\bf Part 3:}  Suppose that  $\overrightarrow{e} \in H(\overrightarrow{\mathcal{G}}_{n})$. Then by the definition of
$E(H(\overrightarrow{\mathcal{G}}_{n}))$ in (\ref{amisibles:edges:Pn}) and part 1 of this proposition,  $\overrightarrow{e}$ is
an admissible edge for $\overrightarrow{P}_{2^n-1}$.

Now suppose that  $\overrightarrow{e}$ is an admissible edge for $\overrightarrow{P}_{2^n-1}$. We prove that
$\overrightarrow{e} \in H(\overrightarrow{\mathcal{G}}_{n})$.
Let $u$ and $v$ be the vertices of $\overrightarrow{e}$  with $f(v)<f(u)=j$.

{\bf Case 1.}  Suppose that $u$ and $v$ are in the same component $\mathcal{C}$ of $\overrightarrow{\mathcal{G}}_{n}\setminus A_{j+1}$.
We prove the case in which $\overrightarrow{e}=\overrightarrow{vu}$ and since the proof of the case in which
$\overrightarrow{e}=\overrightarrow{uv}$ is similar, we omit it. Recall that $\mathcal{C}$ has $2^{j}-1$ vertices and that $j$ is the
largest label in $\mathcal{C}$. Since $u$ has the largest label in the component, it corresponds to the vertex in position $2^{j-1}$. Then
by the definition of $\overrightarrow{e}$, the edge $\overrightarrow{e}$ belongs to $\ra{H}_{j}$ as defined in (\ref{amisibles:edges:Hi}).
Thus, $\overrightarrow{e} \in H(\overrightarrow{\mathcal{G}}_{n})$.

{\bf Case 2.}  Suppose that $u \in \mathcal{C}$ and $v \in \mathcal{C}'$ where $\mathcal{C}$ and $\mathcal{C}'$ are distinct components
of $\overrightarrow{\mathcal{G}}_{n}\setminus A_{j+1}$. Let $w$ be a vertex in
$\mathcal{C}'$ with $f(w)=j$, where $j$ is the largest label in $\mathcal{C}'$.

 {\bf Subcase 1.}  Suppose that $\overrightarrow{e}=\overrightarrow{uv}$. Note that if $v=w$ or if $v$ is located in $\mathcal{C}'$ in a
 position to the left of $w$, then $\overrightarrow{e}$ gives rise to a path connecting $u$ and $w$ which does not contain a larger label in between
 $u$ and $w$. That is a contradiction because $\overrightarrow{e}$ is an admissible edge for $\overrightarrow{P}_{2^n-1}$. Thus, $v$ must be
 located in $\mathcal{C}'$ in a position to the right of $w$. This implies that $\overrightarrow{e}$ satisfies the condition described in the last
 set in the definition of $\ra{H}_{j}$ in (\ref{amisibles:edges:Hi}).

 {\bf Subcase 2.}  Suppose that $\overrightarrow{e}=\overrightarrow{vu}$. Note that if $v=w$ or if $v$ is located in $\mathcal{C}'$ in a
 position to the right of $w$, then $\overrightarrow{e}$ gives rise to a path connecting $w$ and $u$ which does not contain a larger label in between
 $u$ and $w$. That is a contradiction because $\overrightarrow{e}$ is an admissible edge for $\overrightarrow{P}_{2^n-1}$. Thus, $v$ must be
 located in $\mathcal{C}'$ in a position to the left of $w$. This implies that $\overrightarrow{e}$ satisfies the condition described in the last
 set in the definition of $\ra{H}_{j}$ in (\ref{amisibles:edges:Hi}).

{\bf Part 4:} It is easy to see that $\overrightarrow{P}_{2^n-1}$ has $2^n-2$ edges, which can be rewritten as $2(2^{n-1}-1)=2S(n,2)$.
From part 3 of this proposition we know that the set of admissible edges for $\overrightarrow{P}_{2^n-1}$ is $H(\overrightarrow{\mathcal{G}}_{n})$.
Therefore, the total number of admissible edges is the number of edges in $\overrightarrow{\mathcal{G}}_{n}$
minus the number of edges in $\overrightarrow{P}_{2^n-1}$ which is $2(S(n+1,3)-S(n,2))$.
\end{proof}

\section{Admissible Edges of Type II } \label{sec:admissible_edges2}

This section discusses the admissible edges of type II for $\ra{P}_{2^n-1}$ using the direct sum graph of type II defined below.  Throughout this section, it can be
assumed that all admissible edges and direct sum graphs are of type II.  We prove that the direct sum graph is a maximum graph such that $\ra{P}_{2^n-1}$ and
$\la{\mathcal{G}}_n$ have the same set of vertices, $n$-rank number, and vertex labeling when the labeling is minimum.  The symmetry of $\la{\mathcal{G}}_n$ is straightforward, and an example can be seen in Figure \ref{figure2}. Section \ref{sec:matrices} gives a recursive algorithm for the adjacency matrix for
$\la{\mathcal{G}}_n$.  The symmetry found in $\la{\mathcal{G}}_n$ can be clearly seen in the adjacency matrix. It is also anti-diagonal symmetry.

We first discuss several necessary definitions.  Let $\overleftarrow{\mathcal{G}}_2:= \overrightarrow{P}_{3} \cup E(\la{H}_2)$ with vertices
$V(\overleftarrow{\mathcal{G}}_2)=\{v_1, v_2, v_{3}\}$ and edges $E(\la{H}_2)=\{\la{v_1v_2}, \la{v_2v_3} \}$, and let
$\overleftarrow{\mathcal{G}}_2^{'}:= \overrightarrow{P^{'}}_{3} \cup E(\la{H}_2^{'})$ with vertices $V(\overleftarrow{\mathcal{G}}_2^{'})=\{w_1, w_2, w_{3}\}$ and edges
$E(\la{H}_2^{'})=\{\la{w_1w_2}, \la{w_2w_3} \}$. We define the \emph{direct sum graph of type II} recursively, denoted by
$\overleftarrow{\mathcal{G}}_{i+1}=\overleftarrow{\mathcal{G}}_i \oplus \overleftarrow{\mathcal{G}}_i^{'}$,  as the graph with vertex set
$$V(\overleftarrow{\mathcal{G}}_{i+1})=V(\overleftarrow{\mathcal{G}}_i) \cup V(\overleftarrow{\mathcal{G}}_i^{'}) \cup \{v_{2^{i}}\}$$
where $V(\overleftarrow{\mathcal{G}}_i) =\{v_1, v_2, \ldots, v_{2^i-1}\}$ and  $\overleftarrow{\mathcal{G}}_i^{'}$ is the graph obtained from a copy of $\overleftarrow{\mathcal{G}}_i$
by relabeling the vertices of $\overleftarrow{\mathcal{G}}_i$ as follows $w_t:= v_t$ for $t=1, 2, \ldots, 2^i-1$. That is,
$V(\overleftarrow{\mathcal{G}}_i^{'}) =\{w_1, w_2, \ldots, w_{2^i-1}\}$. The edge set of  $\overleftarrow{\mathcal{G}}_{i+1}$ is
\[
    E(\overleftarrow{\mathcal{G}}_{i+1})=E(\overleftarrow{\mathcal{G}}_i) \cup E(\overleftarrow{\mathcal{G}}_i^{'})\cup E(\la{H}_{i})
    \cup \{\ra{v_{2^i-1}v_{2^{i}}}, \ra{v_{2^{i}}w_1} \},
\]
where
\begin{equation}\label{eq:Hileft}
\la{H}_{i}= \{\la{v_jv_{2^i}} | 1 \leq j \leq 2^i-1 \} \cup \{\la{v_{2^{i}}w_j} | 1\leq  j \leq 2^i-1\}.
\end{equation}
For example, $\overleftarrow{\mathcal{G}}_4$ is depicted in Figure \ref{figure2}.

We use $H(\overleftarrow{\mathcal{G}}_{t})$ to denote the graph with the set of vertices equal to the set of vertices of $\overleftarrow{\mathcal{G}}_{t}$
and the set of edges defined by
\begin{equation}\label{amisibles:edges:Pn:2}
E(H(\overleftarrow{\mathcal{G}}_{t})):=\bigcup_{j=2}^{t}\la{H}_{j}.
\end{equation}

Note that when the direction is removed from the edges in this construction, the resulting graph is the graph found in \cite{floreznarayan} for the
undirected case.  Also note that this construction of $\overleftarrow{\mathcal{G}}_n$ is not unique.  That is, there is more than one way to add
admissible edges of type II to $P_{2^n-1}$ to create a graph with the maximum number of these admissible edges as possible while maintaining the rank number.

The numerical sequences in Proposition \ref{lemma:recursiveproperty2} parts (2) and (4) are in Sloane \cite{sloane} at \seqnum{A058922} and \seqnum{A036799}, respectively.

\begin{proposition} \label{lemma:recursiveproperty2}  If  $n\ge 2$ and $\overleftarrow{\mathcal{G}}_{n}$ is the direct sum graph of type II, then

\begin{enumerate}
\item $\chi_{r}(\overleftarrow{\mathcal{G}}_{n})=\chi_{r}(\overrightarrow{P}_{2^{n}-1}) =n$ and the minimum labeling of $\overleftarrow{\mathcal{G}}_{n}$ is unique,

 \item  the total number of edges in $\overleftarrow{\mathcal{G}}_{n}$ is $(n-1)2^n$,

 \item an edge $\overleftarrow{e}$ is admissible of type II for $P_{2^n-1}$ if and and only if $\overleftarrow{e} \in H(\overleftarrow{\mathcal{G}}_{n})$, where
 $H(\overleftarrow{\mathcal{G}}_{n})$ is as in (\ref{amisibles:edges:Pn:2}), and

\item the total number of admissible edges of type II for $\overrightarrow{P}_{2^n-1}$  is $(n-2)2^n+2$.

\end{enumerate}
\end{proposition}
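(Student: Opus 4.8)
The plan is to mirror the proof of Proposition \ref{lemma:recursiveproperty}, since $\overleftarrow{\mathcal{G}}_n$ is built by the same recursive scheme as $\overrightarrow{\mathcal{G}}_n$; the only difference is that every edge added through $\la{H}_i$ now points backward (type II). Parts (1) and (2) I would prove by induction on $n$ with base case $n=2$, part (4) follows from (2) and (3) by a direct count, and part (3) is the heart of the matter. The one structural feature that makes type II cleaner than type I is that, by (\ref{eq:Hileft}), every edge of $\la{H}_i$ is incident to the single vertex $v_{2^i}$, which carries the largest label created at that stage.

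For part (1) let $T(n)$ be the assertion that $\chi_r(\overleftarrow{\mathcal{G}}_n)=n$ and that its unique minimal labeling coincides with the standard labeling of $\overrightarrow{P}_{2^n-1}$. The base case $T(2)$ is immediate. For the inductive step I extend the labelings of $\overleftarrow{\mathcal{G}}_k$ and $\overleftarrow{\mathcal{G}}_k'$ to $\overleftarrow{\mathcal{G}}_{k+1}$ by setting $f(v_{2^k})=k+1$. The only directed paths not already present in the two copies are those that use an edge of $\la{H}_k$ or one of the two new forward edges; since each edge of $\la{H}_k$ has the unique top-labeled vertex $v_{2^k}$ as an endpoint, any directed path joining two equally labeled vertices through $v_{2^k}$ automatically meets the larger label $k+1$ in between. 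Hence $f$ is a valid $(k+1)$-ranking, it is minimal because $v_{2^k}$ genuinely needs a new label, and uniqueness follows from Lemma \ref{rankingofapath} because $\overleftarrow{\mathcal{G}}_{k+1}$ contains $\overrightarrow{P}_{2^{k+1}-1}$ as a spanning subgraph and has the same rank number, so any minimal ranking restricts to the path's unique one.

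For part (2), write $b_n=|E(\overleftarrow{\mathcal{G}}_n)|$. The edge-set definition gives $b_{k+1}=2b_k+|\la{H}_k|+2$, and since $|\la{H}_k|=2(2^k-1)$ this reduces to $b_{k+1}=2b_k+2^{k+1}$ with $b_2=4$; dividing by $2^{k+1}$ and telescoping (or a short induction) gives $b_n=(n-1)2^n$. Part (4) is then immediate once (3) is in hand: by (\ref{amisibles:edges:Pn:2}) the forward edges of $\overleftarrow{\mathcal{G}}_n$ are exactly $\overrightarrow{P}_{2^n-1}$ and its backward edges are exactly $H(\overleftarrow{\mathcal{G}}_n)$, so the number of admissible type II edges is $b_n-|E(\overrightarrow{P}_{2^n-1})|=(n-1)2^n-(2^n-2)=(n-2)2^n+2$.

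Part (3) carries the real work. The forward implication is easy: if $\overleftarrow{e}\in H(\overleftarrow{\mathcal{G}}_n)$ then $\overrightarrow{P}_{2^n-1}\cup\{\overleftarrow{e}\}$ is a subgraph of $\overleftarrow{\mathcal{G}}_n$, whose standard labeling is a valid $n$-ranking by part (1), so $\overleftarrow{e}$ is admissible. For the converse I would take an admissible edge $\overleftarrow{e}=\la{v_av_b}$ with $a<b$, work with the standard labeling and the components of $\overleftarrow{\mathcal{G}}_n\setminus A_{j+1}$, and follow the Case~1 / Case~2 scheme of Proposition \ref{lemma:recursiveproperty}. The key claim is that $\overleftarrow{e}$ is admissible exactly when the unique vertex of largest label on the segment $v_a,\ldots,v_b$ is one of the endpoints $v_a,v_b$; if instead that maximum occurred at an interior vertex, then a directed path that traverses $\overleftarrow{e}$ and continues along the forward path on one side would join two equally labeled vertices with no larger label strictly between them, contradicting admissibility. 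Since this endpoint-maximum condition is precisely membership in the $\la{H}_j$ associated to the component whose label-$j$ center is that endpoint, it places $\overleftarrow{e}$ in $H(\overleftarrow{\mathcal{G}}_n)$. The main obstacle is executing this converse cleanly: one must enumerate all the new directed paths created by a single reverse edge and verify that the endpoint-maximum condition is simultaneously necessary for admissibility and equivalent to membership in $H(\overleftarrow{\mathcal{G}}_n)$.
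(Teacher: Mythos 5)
Your treatment of parts (1), (2), and (4) matches the paper's proof essentially step for step (the induction with $f(v_{2^k})=k+1$, the recurrence $b_{k+1}=2b_k+2^{k+1}$ with $b_2=4$, and the subtraction count), and your observation that every edge of $\la{H}_i$ is incident to the top-labeled vertex $v_{2^i}$ is a clean way to package the paper's case analysis; your uniqueness argument via the spanning path is also fine. The genuine gap is in the converse of part (3). Your key claim --- that a reverse edge $\la{v_av_b}$ is admissible for the bare path $\overrightarrow{P}_{2^n-1}$ \emph{only if} the maximum label on the segment $v_a,\ldots,v_b$ sits at an endpoint --- is false. Take $n=3$ and $\la{e}=\la{v_3v_6}$ (the edge from $v_6$ to $v_3$), with the standard labeling $1,2,1,3,1,2,1$ of $\overrightarrow{P}_7$. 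The maximum on the segment $v_3,\ldots,v_6$ is $3$, attained at the \emph{interior} vertex $v_4$, and $\la{e}\notin H(\la{\mathcal{G}}_3)$. Yet $\la{e}$ is admissible for $\overrightarrow{P}_7$: every directed path in $\overrightarrow{P}_7\cup\{\la{e}\}$ that uses $\la{e}$ has the form $v_s\rightarrow\cdots\rightarrow v_6\rightarrow v_3\rightarrow\cdots\rightarrow v_t$ with $3\le t<s\le 6$, and the only such pair of equally labeled endpoints is $(v_5,v_3)$, whose connecting path passes through $v_6$, which carries the larger label $2$. So the standard labeling remains a valid $3$-ranking and $\chi_r(\overrightarrow{P}_7\cup\{\la{e}\})=3$. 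The violating path you hope to produce by ``traversing $\la{e}$ and continuing along the forward path'' would have to travel \emph{backward} inside a block to reach the mirror vertex of equal label, and those backward edges simply do not exist in $\overrightarrow{P}_{2^n-1}\cup\{\la{e}\}$.

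This is precisely where the paper's Case 2 does something different from you: it builds the violating path using the ``dual direction edges'' inside the components of $\la{\mathcal{G}}_{k+1}\setminus A_{j+1}$, i.e., using edges of $\la{\mathcal{G}}_n$ that are not edges of the path. Consequently, what that argument actually establishes is that an edge outside $H(\la{\mathcal{G}}_n)$ is forbidden for $\la{\mathcal{G}}_n$ --- in other words, $H(\la{\mathcal{G}}_n)$ is a \emph{maximal} set of type II admissible edges --- rather than that such an edge is forbidden for $\overrightarrow{P}_{2^n-1}$ itself. This reading is corroborated by the paper's own remark that the construction of $\la{\mathcal{G}}_n$ ``is not unique,'' i.e., there exist admissible type II edges (such as $\la{v_3v_6}$ above) lying outside $H(\la{\mathcal{G}}_n)$. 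So the edge-by-edge biconditional you set out to prove for the bare path cannot be completed in the form you state it; to finish part (3) you must either weaken it to the maximality statement and argue inside $\la{\mathcal{G}}_n$ as the paper does, or restrict attention to edges that remain admissible in the presence of all of $H(\la{\mathcal{G}}_n)$.
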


\begin{proof} {\bf Part 1:} We prove this part by induction.  Let $T(n)$ be the statement:
$\chi_{r}(\overleftarrow{\mathcal{G}}_{n})=\chi_{r}(\overrightarrow{P}_{2^{n}-1})=n$ for $n>1$ and that $\overleftarrow{\mathcal{G}}_{n}$
and $\overrightarrow{P}_{2^{n}-1}$ have the same minimal labeling.  For this proof suppose that the labeling of $\overleftarrow{\mathcal{G}}_{n}$ is minimal.

The proof of $T(2)$  is straightforward from the definition of $\overleftarrow{\mathcal{G}}_{2}$. Suppose that $T(n)$ is true for some fixed $n=k$ with $k>2$.
That is, suppose that
$\chi_{r}(\overleftarrow{\mathcal{G}}_{k})=\chi_{r}(\overrightarrow{P}_{2^{k}-1})=k$ is true for some fixed $n=k$ with $k>2$, and we prove $T(k+1)$ is also true.

Consider the graphs $\overrightarrow{P}_{2^k-1}$ and $\overleftarrow{\mathcal{G}}_{k}=\overleftarrow{\mathcal{G}}_{k-1} \oplus \overleftarrow{\mathcal{G}}_{k-1}^{'}$.
From the inductive hypothesis and Lemma \ref{rankingofapath}
we know that both graphs have $2^{k}-1$ vertices with the same labeling and that it is minimal and unique.
From the definition of  $\overleftarrow{\mathcal{G}}_{k+1}$ we know that its vertices are $v_1, v_2, \ldots, v_{2^k-1}, v_{2^k}, w_1, w_2, \ldots, w_{2^k-1}$
from left to right. To label $\overleftarrow{\mathcal{G}}_{k+1}$, we define $f$ as follows: the function $f$ keeps the same labels from
$\overleftarrow{\mathcal{G}}_{k}$ for $\{v_1, v_2, \ldots, v_{2^k-1}\}$ and from $\la{\mathcal{G}}_k^{'}$ for $\{ w_1, w_2, \ldots, w_{2^k-1}\}$ and $f(v_{2^k})=k+1$ since
$v_{2^k}$ needs a new label. The function $f$ is a well-defined labeling for $\overleftarrow{\mathcal{G}}_{k+1}$ since $f(v_{2^k}) := k+1$ preserves a proper
labeling for the edges
\begin{eqnarray*}
\{
    \la{v_iv_{2^k}}, \la{v_{2^k}w_i} | 1 \leq i \leq 2^k-1\}, 				   &  & \{\la{v_iv_j} | \la{v_iv_j} \in E(\overleftarrow{\mathcal{G}}_k)\},\\
    \quad  \{\la{w_iw_j} | \la{w_iw_j} \in E(\overleftarrow{\mathcal{G}}_k^{'})\}, 	&\quad \text{and} \quad  & \{\ra{v_{2^k-1}v_{2^k}}, \ra{v_{2^k}w_1}
\}.
\end{eqnarray*}
Since one end of each edge in $\{\la{v_iv_{2^k}}, \la{v_{2^k}w_i} | 1 \leq i \leq 2^k-1\}$ and $\{\ra{v_{2^k-1}v_{2^k}}, \ra{v_{2^k}w_1}\}$
is labeled with the highest label, it is clear that these edges are admissible for $\overleftarrow{\mathcal{G}}_{k+1}$.
The edges $\{\la{v_iv_j} | \la{v_iv_j} \in E(\overleftarrow{\mathcal{G}}_k)\}$ are admissible in $\overleftarrow{\mathcal{G}}_{k+1}$ since they are admissible in
$\overleftarrow{\mathcal{G}}_k$.  Similarly, the edges $\{\la{w_iw_j} | \la{w_iw_j} \in E(\overleftarrow{\mathcal{G}}_k^{'})\}$ are admissible  in
$\overleftarrow{\mathcal{G}}_{k+1}$. Note that $f$ is also a minimal labeling for $\overrightarrow{P}_{2^{k+1}-1}$. This proves $T(k+1)$ is true.

{\bf Part 2:} Let the total number of edges in $\overleftarrow{\mathcal{G}}_{k+1}$ be denoted by $b_{k+1}$.  From the definition of edges of
$\overleftarrow{\mathcal{G}}_{k+1}$ it is easy to see that,
\begin{align*}
    b_{k+1} &= |E(\overleftarrow{\mathcal{G}}_k)| + |E(\overleftarrow{\mathcal{G}}_k^{'})| + |\{ \la{v_iv_{2^k}},
    \la{v_{2^k}w_i} | 1 \leq i \leq 2^k-1 \}| + |\{ \ra{v_{2^k-1}v_{2^k}}, \ra{v_{2^k}w_1}\}| \\
	&= b_k + b_k + 2(2^k-1)+2 \\
	&= 2b_k + 2^{k+1}.
\end{align*}

We prove by induction that the number of edges in $\overleftarrow{\mathcal{G}}_{k+1}$ is given by $k2^{k+1}$. Let $T(n)$ be the statement:
$b_n:=(n-1)2^n$ for $n>1$.

We prove $T(2)$. It is easy to see, from definition of $\overleftarrow{\mathcal{G}}_{2}$, that $b_2= (2-1)2^2 = 4$.
Suppose that $T(n)$ is true for some fix $n=k$ with $k>2$. That is, suppose that $b_{k} =(k-1)2^k$ for some fix $n=k$
with $k>2$ and  we prove $T(k+1)$. Since $b_{k+1} = 2b_k+2^{k+1}$, we have that
\begin{align*}
b_{k+1} &= 2b_k+2^{k+1} \\
	&= 2((k-1)2^k)+2^{k+1} \\
	&= k2^{k+1}.
\end{align*}
Thus, $\overleftarrow{\mathcal{G}}_{n}$ has $(n-1)2^n$ edges.

{\bf Part 3:} Suppose that $\la{e} \in H(\la{\mathcal{G}}_n)$.  Then by the definition of $E(H(\la{\mathcal{G}}_n)$ and part 1 of this
proposition, $\la{e}$ is an admissible edge for $\ra{P}_{2^n-1}$.

Now suppose that $\la{e}$ is an admissible edge for $\ra{P}_{2^n-1}$.  We prove that $\la{e} \in H(\la{\mathcal{G}}_n)$ by induction.  Let $T(n)$
be the statement: if $\la{e}$ is admissible in $\ra{P}_{2^n-1}$, then $\la{e} \in H(\la{\mathcal{G}}_n)$ for $n>1$.

We prove $T(2)$.  Let $\la{e}$ be admissible in $\ra{P}_{2^2-1}$.  Then either $\la{e}$ is in $\la{H}_2$ or is $\la{v_1v_3}$.
The edge $\la{v_1v_3}$ leads to a contradiction since $f(v_1)=f(v_3)=1$ violates a proper labeling.  Therefore, if $\la{e}$ is
admissible in $\ra{P}_{2^2-1}$, then $\la{e} \in \la{H}_2$, and thus $\la{e} \in H(\la{\mathcal{G}}_2)$.

Suppose that $T(n)$ is true for some fixed $n=k$ with $k>2$.  Suppose that $\la{e}$ is admissible in $P_{2^{k+1}-1}$ with
$u$ and $v$ as endpoints such that $f(v)<f(u)=j$.

{\bf Case 1.} Suppose that $u$ and $v$ are in the same component $\mathcal{C}$ of $\la{\mathcal{G}}_{k+1}\setminus A_{j+1}$.  Then $u$ has the
largest label in $\mathcal{C}$ and is in position $2^{j-1}$.  Thus, $\la{e} \in \la{H}_j$ as defined in (\ref{eq:Hileft}) and $\la{e} \in H(\la{\mathcal{G}}_{k+1})$.

{\bf Case 2.} Suppose that $u \in \mathcal{C}$ and $v \in \mathcal{C}^{'}$ where $\mathcal{C}$ and $\mathcal{C}^{'}$ are distinct components of
$\la{\mathcal{G}}_{k+1} \backslash A_{j+1}$.  Let $w \in \mathcal{C}^{'}$ such that $f(w)=j$.  The edge $\la{e}$ gives rise to a path connecting
$u$ and $w$ which does not contain a larger label in between $u$ and $w$ since each component contains all dual direction edges on the path.  Such a
path contradicts $\la{e}$ being admissible in $\ra{P}_{2^{k+1}-1}$. Therefore, $\la{e}$ is admissible in $\ra{P}_{2^n-1}$ if and only if
$\la{e} \in H(\la{\mathcal{G}}_n)$.

{\bf Part 4:} It is easy to see that $\overrightarrow{P}_{2^n-1}$ has $2^n-2$ edges. From part 3 of this proposition we know that set of
admissible edges for $\overrightarrow{P}_{2^n-1}$ is $H(\overleftarrow{\mathcal{G}}_{n})$. Therefore, the total number of admissible edges is the
number of edges in $\overleftarrow{\mathcal{G}}_{n}$ minus the number of edges in
$\overrightarrow{P}_{2^n-1}$ which is $(n-2)2^n+2$.
\end{proof}

\section{Adjacency Matrices of $\protect\overrightarrow{\mathcal{G}}_{n}$ and  $\protect\overleftarrow{\mathcal{G}}_{n}$} \label{sec:matrices}

In this section we give recursive algorithms that highlight the symmetric structure of the graphs $\overrightarrow{\mathcal{G}}_{n}$ and
$\overleftarrow{\mathcal{G}}_{n}$. The algorithms are based on block-recursive adjacency matrices for direct sum graphs of type I and II.
The matrices present symmetry with respect to the antidiagonal rather than the main diagonal (see for example Tables
\ref{example_matrices} and \ref{example_matrices_II}).

In Table \ref{example_matrices} we show  the matrices  $A_{2}$ and $A_{3}$ that represent direct sum graphs of type I. We observe that $A_2$
forms three blocks within $A_3$. Similarly, $A_4$ will contain three blocks of $A_3$ and so on.  As mentioned previously, this symmetry is not
obvious from looking at the corresponding graph, such as in Figure \ref{figure1}. The component symmetry of direct sum graphs of type II is clear
from a graph, such as in Figure \ref{figure2}, but the fact that it is antidiagonal symmetry is obvious in the adjacency matrix found in Table \ref{example_matrices_II}.
It should be clear that matrices for direct sum graphs of type I have the same block-recursive structure as matrices
for direct sum graphs of type II, but the contents of the blocks are different.

In Algorithm \ref{alg:make_matrix}, $A_{k}$ denotes a $(2^{k}-1)\times (2^{k}-1)$ matrix. We use $k$ in this manner because it simplifies our description
of the recursion. We denote by $1_{k}$ the vector of length $2^{k}-1$ where all entries are $1$ and the transpose of this vector is denoted by $1_{k}^T$.
We denote by $0_{k}$ the $2^k\times 2^{k}$ matrix where all its entries are zero. We divide matrix $A_{k}$ into blocks with the layout shown in
Algorithm~\ref{alg:make_matrix}. With this example in mind, our algorithm for constructing a matrix $A_{k}$ follows. Note that $A_{0}$ and $1_{0}$ are not defined.

\begin{table}[htbp]
  \centering
  $A_2= \left[
            \begin{array}{ccc}
                    0&1&0\\
                    0&0&1 \\
                    0&0&0\\
            \end{array}
    \right]
    \hspace{2cm}
A_{3} =\left[
		\begin{array}{cc}
	       \begin{array}{|ccc|c|ccc|} 0&1&0&1&0&1&0 \\ 0&0&1&1&0&0&1 \\  0&0&0&1&0&0&0 \\\cline{1-3}  \cline{5-7}\end{array}\\
		\begin{array}{ccccccc} 0&0&0&0&1&1&1 \\\cline{5-7}  \end{array}\\
		\begin{array}{cccc|ccc|} 0&0&0&0&0&1&0 \\  0&0&0&0&0&0&1 \\  0&0&0&0&0&0&0  \\ \end{array}\\
	\end{array}\right]$
  \caption{The adjacency matrices for the direct sum graphs of type I with $2^2-1$ and $2^3-1$ vertices, respectively.}\label{example_matrices}
\end{table}

We observe, from the recursive definition of the graph $\overrightarrow{\mathcal{G}}_{n}$, that the adjacency matrix of $\overrightarrow{\mathcal{G}}_{n}$
embeds in the Sierpi\'nski sieve triangle. For example, Figure \ref{sierpinski} shows $A_3$ embedded in the corresponding triangle.
The entries of $A_3$ are within the region  bounded by the dashed diamond shape in Figure \ref{sierpinski} part (a).
Thus, the entries on the main diagonal of $A_3$ correspond to the entries on line nine of the Sierpi\'nski sieve triangle (which is row eight within the diamond).
The entries on the super diagonal of $A_3$ correspond to the entries on line eight of the Sierpi\'nski sieve triangle (which is row seven within the diamond).  The
entries on the subdiagonal of $A_3$ correspond to the entries on line 10  of the Sierpi\'nski sieve triangle (which is row nine within the diamond) and so on.
Thus, the entry $a_{ij}$ of $A_k$ is given by ${2^k -j+i \choose i} \mod 2$.
Due to this embedding, $\overrightarrow{\mathcal{G}}_{n}$ should be called the  \emph{directed Sierpi\'nski graph}.
In fact, because of this embedding, we can use any number of algorithms to construct $A_k$ and the corresponding graphs.
For related graphs, see for example the undirected Pascal  graphs defined by Deo and Quinn \cite{deo}. The undirected Sierpi\'nski graph
(or Hanoi graph) is represented in Figure \ref{sierpinski} part (b) (see for example Romik \cite{romik}).

One motivation for Deo and Quinn \cite{deo} in describing undirected Pascal graphs was to define bidirectional
computer network topologies with certain connectivity and cohesion constraints. It should be obvious how the
incidence matrix of a type I graph embeds in an undirected Pascal graph--replacing by zero all nonzero entries that
are below the main diagonal of the adjacency matrix of an undirected Pascal graph. Modern networks enable roles,
communication, protocols, or permissions for which operations on distributed systems are asymmetric. Our directed graphs
share some of the properties of Pascal graphs and may be useful in defining asymmetric computer networks with guaranteed properties.

\begin{figure} [htbp]
\begin{center}
\includegraphics[width=75mm]{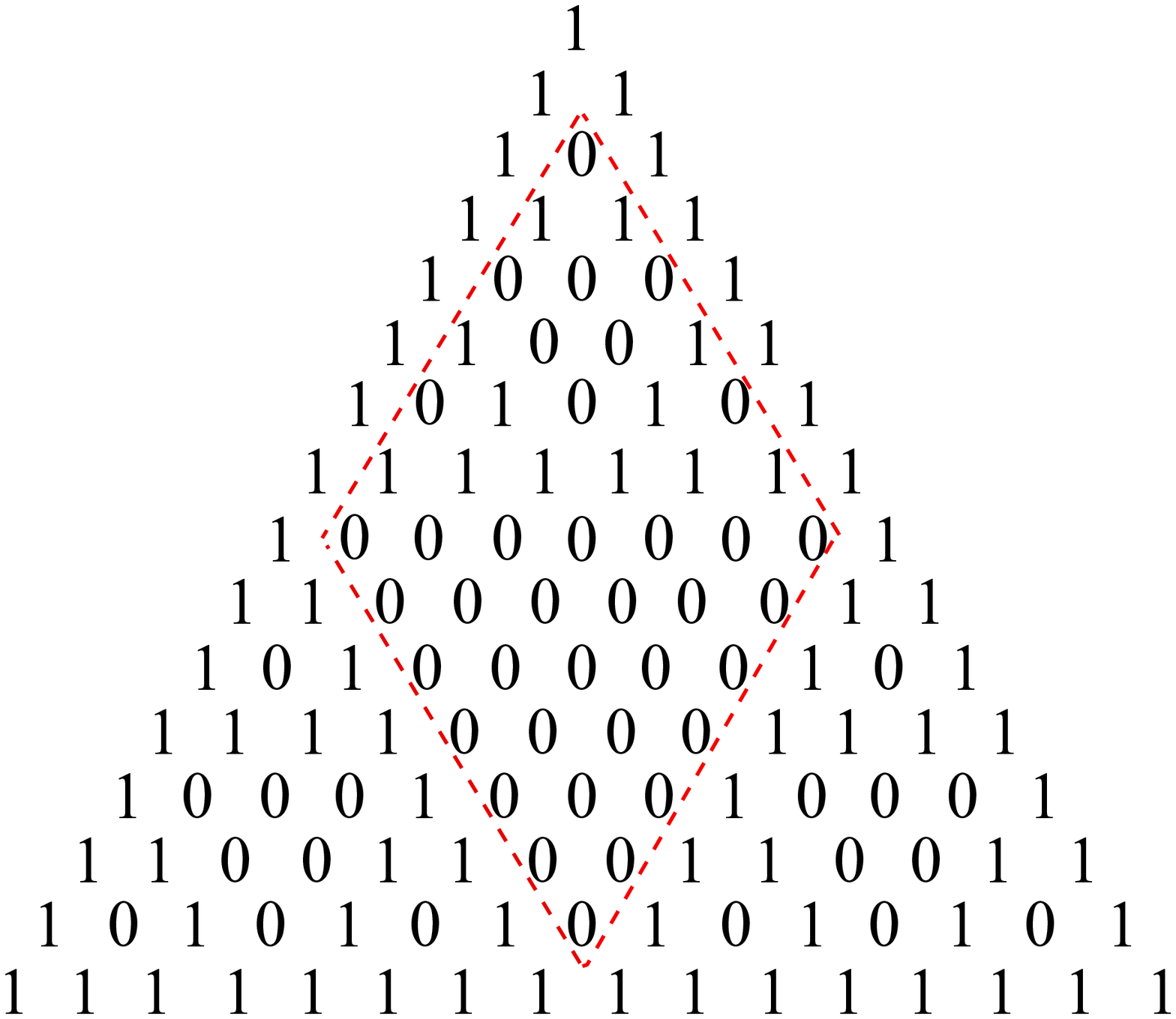} \hspace{1cm}
\includegraphics[width=70mm]{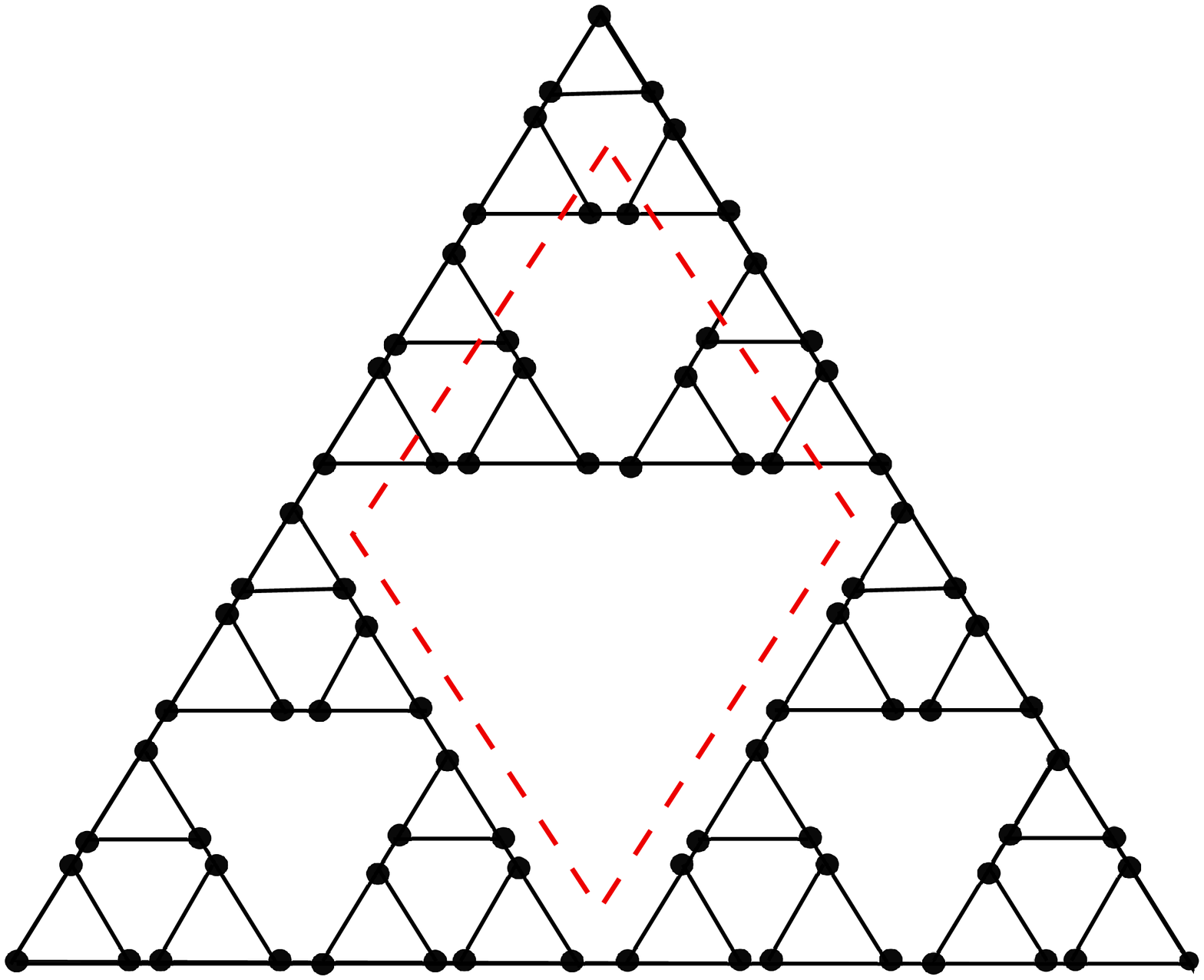}
\end{center}
\caption{(a)\; Sierpi\'nski triangle. \hspace{2cm} (b)\; Sierpi\'nski sieve graph  or Hanoi graph.}
\label{sierpinski}
\end{figure}

In describing Algorithm \ref{alg:make_matrix_II}, we use $B$, $t$ and $u$ to avoid confusion with Algorithm \ref{alg:make_matrix}.
We use $B_{t}$ to denote a $(2^{t}-1)\times (2^{t}-1)$ matrix. As with $k$ above, we use $t$ in this manner because it simplifies
our description of the recursion. We denote by $1_{t}$ the vector of length $2^{t}-1$ where all entries are $1$. We denote by $0_{t}$
the $(2^{t}-1)\times (2^{t}-1)$ matrix where all its entries are zero. We denote by $J_{t}$ a single-entry column vector of length
$(2^{t}-1)$ where the last element is 1 and all others are 0 and by $J'_{t}$ a single-entry row vector where the first element is 1
and all others are 0. We divide matrix $B_{t}$ into blocks with the layout shown in
Algorithm~\ref{alg:make_matrix_II}. For example, in Table \ref{example_matrices_II}, we show the matrices  $B_{2}$ and $B_{3}$.

Our algorithm for constructing $B_{t}$ follows. As with $A_{k}$, $B_{0}$ and $1_{0}$ are not defined.

\begin{table} [htbp]
\begin{center}
$B_2= \left[
            \begin{array}{ccc}
                    0&1&0\\
                    1&0&1 \\
                    0&1&0\\
            \end{array}
    \right]
    \hspace{2cm}
B_{3} =\left[
		\begin{array}{cc}
	       \begin{array}{|ccc|c|ccc|} 0&1&0&0&0&0&0 \\ 1&0&1&0&0&0&0 \\  0&1&0&1&0&0&0 \\\cline{1-3} \cline{5-7} \end{array}\\
		\begin{array}{ccccccc} 1&1&1&0&1&0&0 \\\cline{1-3} \cline{5-7} \end{array}\\
		\begin{array}{|ccc|c|ccc|} 0&0&0&1&0&1&0 \\  0&0&0&1&1&0&1 \\  0&0&0&1&0&1&0  \\ \end{array}\\
	\end{array}\right]$
\end{center}
\caption{The adjacency matrices for the direct sum graphs of type II with $2^2-1$ and $2^3-1$ vertices, respectively.}
\label{example_matrices_II}
\end{table}

\begin{algorithm}
 \begin{algorithmic}
 	\State $1_{k} = \begin{bmatrix}1\end{bmatrix}_{(2^{k}-1)\times 1}$
 	\State $0_{k} = \begin{bmatrix}0\end{bmatrix}_{2^{k}\times 2^{k}}$
	\State $A_{1} = \begin{bmatrix}0 \end{bmatrix}$\\
 	
\Function{buildMatrix}{$k$}
		\If {$k \equiv  1$}
			\State  $A_{k} \gets A_{1}$
		\Else
			\State $j \gets k-1$ \\
			\State $A_{k} \gets \begin{bmatrix}
                   A_{j} & 1_{j} & A_{j} \\
                     \multicolumn{2}{c}{\multirow{2}{*}{$0_{j}$}} & 1_{j}^{T} \\
                     && A_{j} \end{bmatrix}$ \\
		\EndIf
\State \Return $A_{k}$
 \EndFunction
 \end{algorithmic}
  \caption{Construct $A_k$ from $A_{k-1}$ recursively.}
 \label{alg:make_matrix}
 \end{algorithm}

\begin{algorithm}
 \begin{algorithmic}
 	\State $1_{t} = \begin{bmatrix}1\end{bmatrix}_{(2^{t}-1)\times 1}$
 	\State $0_{t} = \begin{bmatrix}0\end{bmatrix}_{(2^{t}-1)\times (2^{t}-1)}$
    \State $J_{t}$ = single-entry vector $J_{(2^{t}-1)\times 1 }$, where the last element is 1, all others are 0
    \State $J'_{t}$ = single-entry vector $J_{1 \times (2^{t}-1) }$, where the first element is 1, all others are 0
	\State $B_{1} = \begin{bmatrix}0 \end{bmatrix}$\\
	
\Function{buildMatrixII}{$t$}
		\If {$t \equiv  1$}
			\State  $B_{t} \gets B_{1}$
		\Else
			\State $u \gets t-1$ \\
			\State $B_{t} \gets \begin{bmatrix}
                    B_{u} & J_{u} & 0_{u} \\
                    1_{u}^{T} &   0   & J'_{u} \\
                    0_{u} & 1_{u} & B_{u} \end{bmatrix} $ \\
		\EndIf
\State \Return $B_{t}$
 \EndFunction
 \end{algorithmic}
  \caption{Construct $B_t$ from $B_{t-1}$ recursively.}
 \label{alg:make_matrix_II}
 \end{algorithm}	
	
\section{Admissible Edges for a Directed Cycle} \label{sec:admissible_directed_cyle}

In this section we use the results from previous sections to find the admissible edges of type I and type II for  $\overrightarrow{C}_{2^k}$ and prove similar results. Figure
\ref{figure1_cycle} part (a) shows $\overrightarrow{C}_{2^k}$ with the admissible edges of type I.  The number of edges in this graph is 65 and the number
of admissible edges is 50. Figure  \ref{figure1_cycle} part (b)  shows $\overrightarrow{C}_{2^k}$ with the admissible edges of type II.   The number of
edges in this graph is 65 and the number of admissible edges is 49. The number of  admissible  edges and the number of edges of the new  graph are given
by known numerical sequences. One of these sequences is the Stirling numbers. In both parts the rank number is equal to the rank number of $\overrightarrow{C}_{2^k}$.

\begin{figure} [htbp]
\begin{center}
\includegraphics[width=70mm]{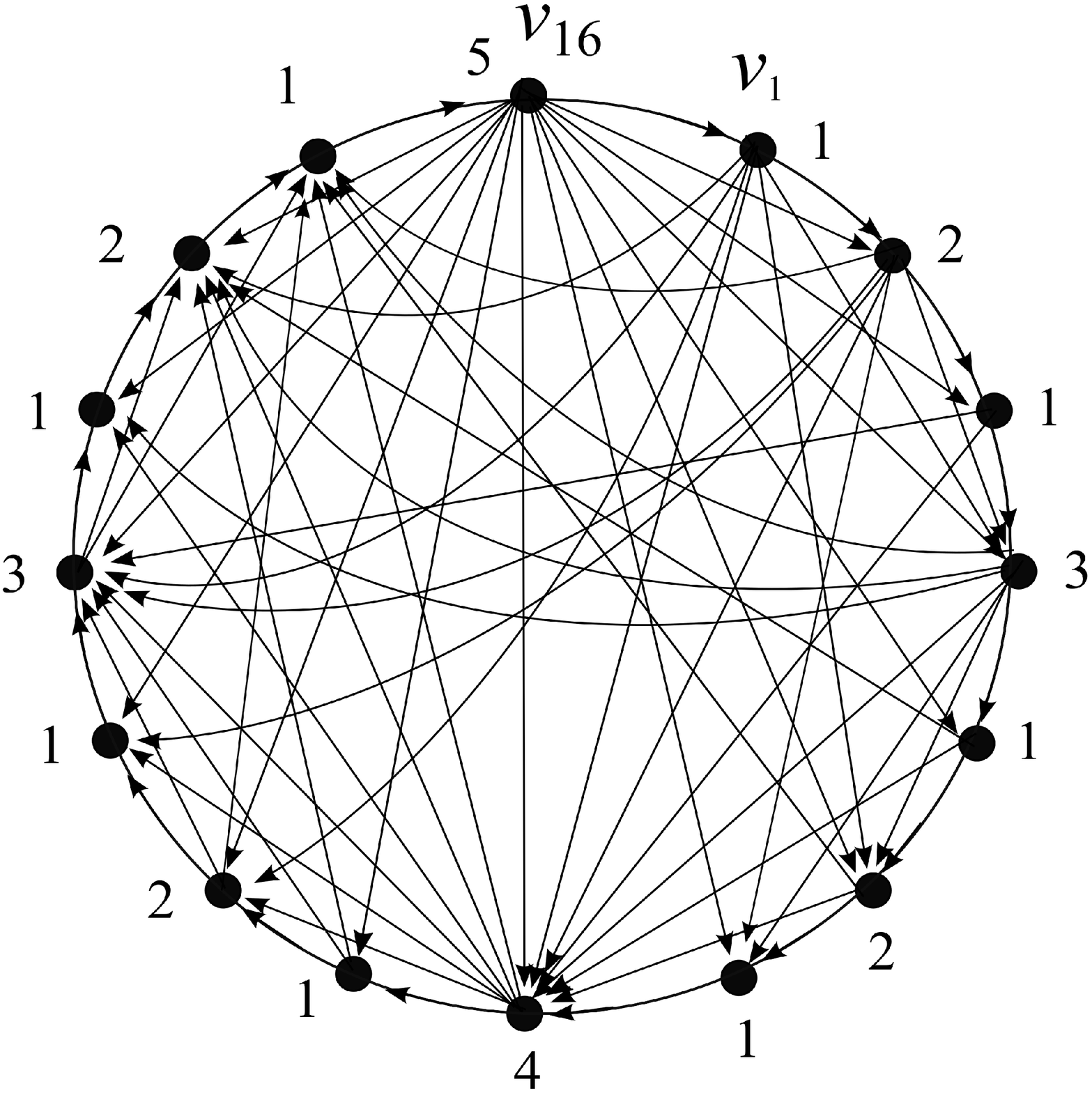} \hspace{1cm}
\includegraphics[width=70mm]{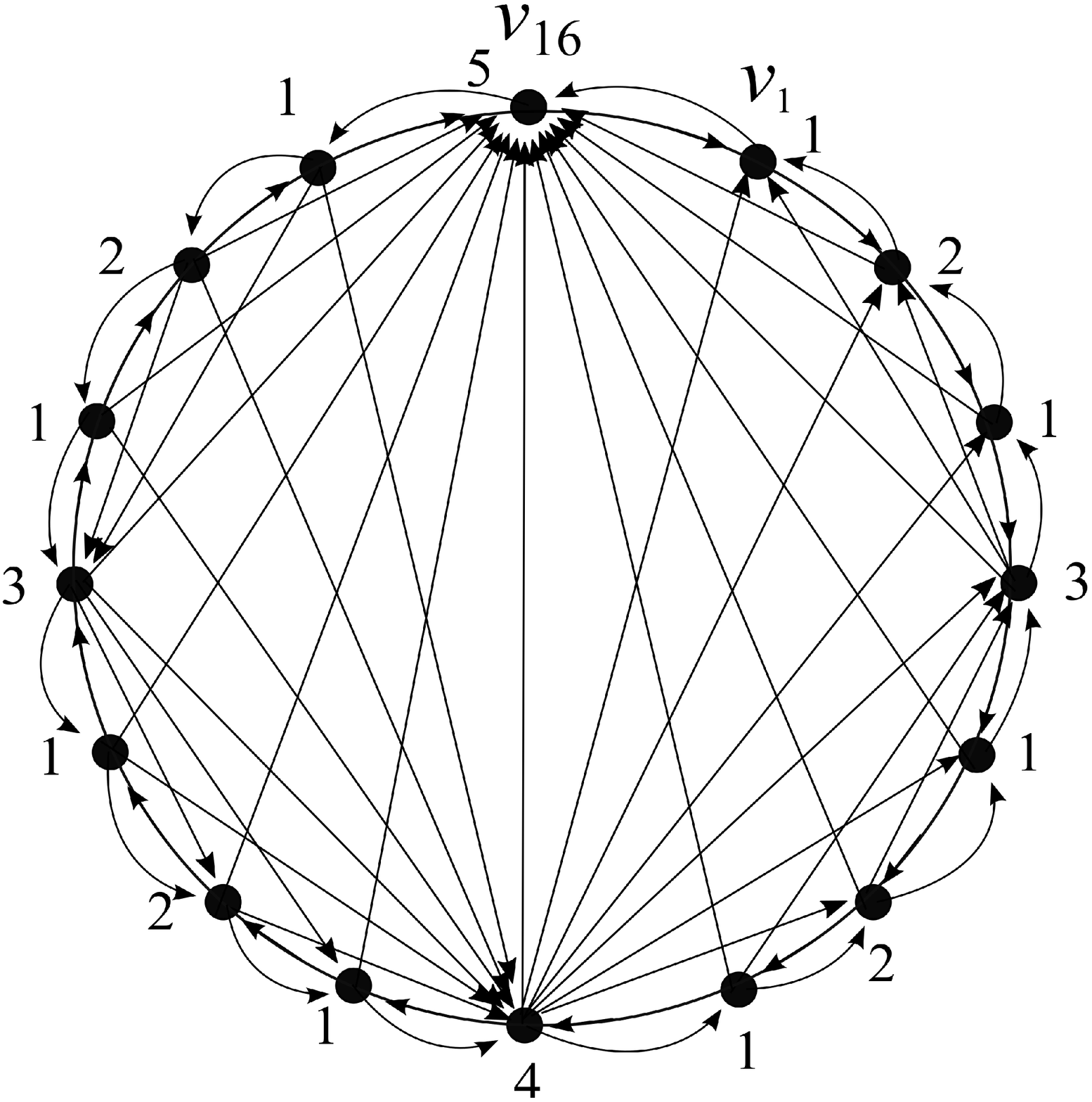}
\end{center}
\caption{(a)\; Admissible edges of type I. \hspace{.5cm} (b)\; Admissible edges of type II.}
\label{figure1_cycle}
\end{figure}

We recall that $V:=\{v_{1},v_{2},\ldots,v_{2^k}\}$ is the set of vertices of $\overrightarrow{C}_{2^k}$, and that $V\setminus \{v_{2^k} \}$
is the set of vertices of  $\overrightarrow{P}_{2^{k}-1}$. We define

\begin{equation}\label{rightHC}
 H_{\overrightarrow{C}} =  H(\overrightarrow{\mathcal{G}}_{k}) \cup \{\overrightarrow{e} \mid \overrightarrow{e} \not\in \overrightarrow{C}_{2^k}
    \text{ with } \ra{e}=\ra{v_{2^k}v_i} \text{ where } i \in \{2, \ldots, 2^k-2 \} \}
\end{equation}

where $H(\overrightarrow{\mathcal{G}}_{k})$ is the set of admissible edges of type I for $\overrightarrow{P}_{2^{k}-1}$  (see (\ref{amisibles:edges:Pn})).
We now define

\begin{equation}\label{leftHC}
 H_{\overleftarrow{C}} = H(\overleftarrow{\mathcal{G}}_{k}) \cup \{\overleftarrow{e} \mid \overleftarrow{e} \not\in \overrightarrow{C}_{2^k}
    \text{ with } \la{e} = \la{v_{2^k}v_i} \text{ where }  i \in \{2, \ldots, 2^k-2 \}  \}
\end{equation}

where $H(\overleftarrow{\mathcal{G}}_{k})$ is the set of admissible edges of type II for $\overrightarrow{P}_{2^{k}-1}$ (see (\ref{amisibles:edges:Pn:2})).

We use $\overrightarrow{\Omega}_{2^{k}}$ and $\overleftarrow{\Omega}_{2^{k}}$ to mean the graphs $\overrightarrow{C}_{2^{k}} \cup H_{\overrightarrow{C}}$
and $\overrightarrow{C}_{2^{k}} \cup H_{\overleftarrow{C}}$, respectively. The numerical sequences in Proposition \ref{goodarcs} parts (3), (4) and (6)
are in Sloane \cite{sloane} at \seqnum{A001047}, \seqnum{A002064}
(called Cullen numbers),  and \seqnum{A048495}, respectively.

\begin{proposition} \label{goodarcs}  If $\overrightarrow{\Omega}_{2^{k}}=\overrightarrow{C}_{2^{k}} \cup H_{\overrightarrow{C}}$ and
 $\overleftarrow{\Omega}_{2^{k}} = \overrightarrow{C}_{2^{k}} \cup H_{\overleftarrow{C}}$ with $k\ge 2$, then
\begin{enumerate}
\item the set $H_{\overrightarrow{C}}$ is the set of admissible  edges of type I for $\overrightarrow{C}_{2^k}$
if and only if
$$\chi_r( \overrightarrow{\Omega}_{2^{k}})= \chi_r(\overrightarrow{C}_{2^k})=k+1,$$

\item the set $ H_{\overleftarrow{C}}$ is the set of admissible  edges of type II  for $\overrightarrow{C}_{2^k}$
if and only if
$$\chi_r(\overleftarrow{\Omega}_{2^{k}})= \chi_r(\overrightarrow{C}_{2^k})=k+1,$$

\item  the total number of edges in $ \overrightarrow{\Omega}_{2^{k}}$ is $3^{k}-2^{k}$,

\item  the total number of edges in $ \overleftarrow{\Omega}_{2^{k}}$ is $k2^k+1$,

\item  the total number of admissible edges of type I for $\overrightarrow{C}_{2^k}$  is the Stirling number of the second kind $2S(k+1,3)=3^{k}-2^{k+1}+1$,
 and

\item the total number of admissible edges of type II for $\overrightarrow{C}_{2^k}$  is $(k-1)2^k+1$.
\end{enumerate}

\end{proposition}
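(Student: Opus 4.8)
The plan is to treat the directed cycle $\overrightarrow{C}_{2^k}$ as the directed path $\overrightarrow{P}_{2^k-1}$ on $v_1,\dots,v_{2^k-1}$ together with the apex vertex $v_{2^k}$ and the two linking edges $\overrightarrow{v_{2^k-1}v_{2^k}}$ and $\overrightarrow{v_{2^k}v_1}$, and to reduce every statement to the path results in Propositions~\ref{lemma:recursiveproperty} and~\ref{lemma:recursiveproperty2}. The first step is to record the structural fact that in the unique standard minimal ranking of $\overrightarrow{C}_{2^k}$ given by Lemma~\ref{rankingofapath}, the apex $v_{2^k}$ is the only vertex carrying the top label $k+1$, and that deleting $v_{2^k}$ leaves exactly the standard labeling of $\overrightarrow{P}_{2^k-1}$. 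This makes $v_{2^k}$ a barrier: any directed path in $\overrightarrow{\Omega}_{2^k}$ (or $\overleftarrow{\Omega}_{2^k}$) joining two equally labeled vertices either avoids $v_{2^k}$, and then lives inside the copy of $\overrightarrow{\mathcal{G}}_k$ (resp. $\overleftarrow{\mathcal{G}}_k$) carried by $v_1,\dots,v_{2^k-1}$ and meets a larger label by Part~1 of the relevant proposition, or it runs through $v_{2^k}$ and hence meets the label $k+1$. For Parts~1 and~2 I would then argue two inequalities: since $\overrightarrow{\Omega}_{2^k}\supseteq\overrightarrow{C}_{2^k}$ and adjoining edges never lowers the rank number, $\chi_r(\overrightarrow{\Omega}_{2^k})\ge\chi_r(\overrightarrow{C}_{2^k})=k+1$; and the barrier observation shows the standard cycle labeling is still a valid ranking of $\overrightarrow{\Omega}_{2^k}$, so $\chi_r(\overrightarrow{\Omega}_{2^k})\le k+1$. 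Equality follows, and the identical argument settles $\overleftarrow{\Omega}_{2^k}$.

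It remains to prove the maximality half of Parts~1 and~2, namely that every admissible type~I (resp. type~II) edge for $\overrightarrow{C}_{2^k}$ already lies in $H_{\overrightarrow{C}}$ (resp. $H_{\overleftarrow{C}}$). I would split the candidate edges into those lying wholly inside $\{v_1,\dots,v_{2^k-1}\}$ and those incident to $v_{2^k}$. For the first kind, the cycle's unique minimal labeling restricts to the unique minimal labeling of $\overrightarrow{\mathcal{G}}_k$ on the path vertices, and any edge forbidden for $\overrightarrow{P}_{2^k-1}$ produces a monochromatic directed path that still survives after the apex is attached; thus Part~3 of Propositions~\ref{lemma:recursiveproperty} and~\ref{lemma:recursiveproperty2} pins these admissible edges down to exactly $H(\overrightarrow{\mathcal{G}}_k)$ (resp. $H(\overleftarrow{\mathcal{G}}_k)$) as in (\ref{amisibles:edges:Pn}) and (\ref{amisibles:edges:Pn:2}). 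For edges incident to $v_{2^k}$ I would rerun the component analysis of those proofs: with endpoints $u,v$ and $f(v)<f(u)=j$, examine the components of $\overrightarrow{\Omega}_{2^k}\setminus A_{j+1}$, where $A_{j+1}=\{x:f(x)\ge j+1\}$, and show that the edge is forbidden precisely when it traps a directed path between two equally labeled vertices inside a single component of $\mathcal{C}(A_{j+1})$. This is what should single out the apex chords $\overrightarrow{v_{2^k}v_i}$ (resp. $\overleftarrow{v_{2^k}v_i}$) with $i\in\{2,\dots,2^k-2\}$ of (\ref{rightHC}) and (\ref{leftHC}).

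Parts~3--6 are then bookkeeping built on the characterizations. For Part~3 I would write $\overrightarrow{\Omega}_{2^k}=\overrightarrow{C}_{2^k}\cup H_{\overrightarrow{C}}$ as a disjoint union and add the $2^k$ cycle edges, the apex chords counted from the index range of $i$, and $|H(\overrightarrow{\mathcal{G}}_k)|=|E(\overrightarrow{\mathcal{G}}_k)|-|E(\overrightarrow{P}_{2^k-1})|=(3^k-2^{k+1}+1)-(2^k-2)$ taken from Parts~2 and~4 of Proposition~\ref{lemma:recursiveproperty}, then collect terms into the stated closed form. Part~4 is the same computation with $|E(\overleftarrow{\mathcal{G}}_k)|=(k-1)2^k$ from Proposition~\ref{lemma:recursiveproperty2}. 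Parts~5 and~6 follow immediately, since by Parts~1 and~2 the admissible edges are exactly $H_{\overrightarrow{C}}$ and $H_{\overleftarrow{C}}$, so their counts equal the edge counts of $\overrightarrow{\Omega}_{2^k}$ and $\overleftarrow{\Omega}_{2^k}$ minus the $2^k$ edges of $\overrightarrow{C}_{2^k}$; I would simplify the type~I total to the Stirling form $2S(k+1,3)=3^k-2^{k+1}+1$ and the type~II total to $(k-1)2^k+1$.

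The hard part will be the maximality direction of Parts~1 and~2, and in particular the treatment of edges incident to the apex. Because $v_{2^k}$ carries the strictly largest label, many edges touching it fail to create any ranking violation on their own, so the proof cannot argue edge-by-edge in isolation; it must use the cyclic geometry to decide exactly which chords, once present, force a monochromatic directed path to remain trapped within one component of $\overrightarrow{\Omega}_{2^k}\setminus A_{j+1}$. Carrying this analysis through so that it yields precisely the index range $i\in\{2,\dots,2^k-2\}$, and hence the exact constants quoted in Parts~3--6, is the step that demands the most care.
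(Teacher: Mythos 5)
Your handling of the rank-equality half of Parts 1 and 2 (monotonicity for the lower bound, the apex-as-barrier labeling for the upper bound) is correct and is in substance the paper's own argument: the paper likewise splits $H_{\overrightarrow{C}}$ into $H(\overrightarrow{\mathcal{G}}_{k})$, handled by Propositions \ref{lemma:recursiveproperty} and \ref{lemma:recursiveproperty2}, plus the chords at the apex $v_{2^k}$, which are harmless because $v_{2^k}$ is the unique vertex with label $k+1$. The genuine gap is the step you yourself single out as the hard part. The edge-by-edge maximality statement you commit to proving --- that every individually admissible type I edge for $\overrightarrow{C}_{2^k}$ already lies in $H_{\overrightarrow{C}}$ --- is false, so no amount of care in the component analysis can rescue it. Take $\overrightarrow{e}=\overrightarrow{v_1v_3}$. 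It is forbidden for $\overrightarrow{P}_{2^k-1}$ (any $k$-ranking of $\overrightarrow{P}_{2^k-1}\cup\{\overrightarrow{e}\}$ restricts to the unique $k$-ranking of the path, in which $v_1$ and $v_3$ share label $1$ and are joined by the chord with no intermediate vertex), hence $\overrightarrow{e}\notin H(\overrightarrow{\mathcal{G}}_{k})$ by Proposition \ref{lemma:recursiveproperty} part 3, and it is not a chord at $v_{2^k}$, so $\overrightarrow{e}\notin H_{\overrightarrow{C}}$. Nevertheless $\overrightarrow{e}$ is admissible for $\overrightarrow{C}_{2^k}$: rotate the standard ranking of the cycle by one position so the top label $k+1$ sits on $v_1$ (for $k=2$, label $v_1,v_2,v_3,v_4$ with $3,1,2,1$). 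A rotation of a valid ranking of a directed cycle is still valid, and every directed path using $\overrightarrow{v_1v_3}$ passes through $v_1$, whose label exceeds the common label of any monochromatic pair; hence $\chi_r(\overrightarrow{C}_{2^k}\cup\{\overrightarrow{e}\})=k+1$. The fallacy in your sketch is precisely the sentence ``any edge forbidden for $\overrightarrow{P}_{2^k-1}$ produces a monochromatic directed path that still survives after the apex is attached'': that argument fixes the standard labeling, whereas admissibility quantifies over all $(k+1)$-labelings, and the minimal ranking of $\overrightarrow{C}_{2^k}$ is unique only up to rotation, so the extra top label can migrate to break the violation. (For the path the minimal ranking is rigid, which is why this style of argument works in Proposition \ref{lemma:recursiveproperty} but not here.) Note that the paper never attempts, and does not need, your maximality step: its ``necessity'' direction is the one-line monotonicity remark, and the substance of its proof of Part 1 is only that adjoining all of $H_{\overrightarrow{C}}$ keeps the rank number at $k+1$; since admissibility is not monotone under adding edges, maximality of $\overrightarrow{\Omega}_{2^{k}}$ as a graph is a different claim from the one you set out to prove.

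A second, smaller defect is that your bookkeeping does not actually produce the stated constants. Part 3 works: $2^k+(2^k-3)+\bigl[(3^k-2^{k+1}+1)-(2^k-2)\bigr]=3^k-2^k$. But with the chord range $i\in\{2,\dots,2^k-2\}$ of (\ref{rightHC}) and (\ref{leftHC}), the same computation gives $k2^k-1$ for Part 4, $3^k-2^{k+1}$ for Part 5, and $(k-1)2^k-1$ for Part 6, not the stated $k2^k+1$, $3^k-2^{k+1}+1$, and $(k-1)2^k+1$. These off-by-constant mismatches are inconsistencies inside the paper itself (its own proof of Part 5, path-admissible edges plus $|W|=2^k-3$, also yields $3^k-2^{k+1}$, and the stated type II counts require chords at $i=1$ and $i=2^k-1$ as well), but your proposal asserts that the arithmetic ``collects terms into the stated closed form,'' and as written it cannot; you would need to flag the discrepancy or adjust the chord sets before this step of your proof goes through.
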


\begin{proof}
We prove parts 1, 3, and 5.  The proofs for part 2, 4, and 6 are similar, respectively, and we omit them.  We assume all admissible edges are of type I throughout this proof.

{\bf Part 1:} For the proof of  necessity, it is easy
to see that if  $H_{\overrightarrow{C}}$ is not a set of admissible edges, then this set contains a
forbidden edge; therefore the rank number of $\overrightarrow{\Omega}_{2^{k}}$ is different that the
rank number of  $\overrightarrow{C}_{2^k}$.  That is a contradiction.

We now prove sufficiency. From Lemma \ref{rankingofapath} we know that
$\chi_r(\overrightarrow{C}_{2^k})=k+1$ and that the minimal ranking is
unique. Suppose that $f:V(\overrightarrow{C}_{2^k})\rightarrow\{1,2,\ldots,k+1\}$
is the ranking function of $\overrightarrow{C}_{2^k}$. So,
$f(v_{2^k})=k+1$.

Let $\overrightarrow{e}_1:=\ra{v_{2^k-1} v_{2^k}}$ and $\overrightarrow{e}_2:=\ra{v_{2^k} v_{1}}$ be two
edges of $\overrightarrow{C}_{2^k}$ and let $H'$ be subgraph of $H_{\overrightarrow{C}}$ formed by all edges of
$H_{\overrightarrow{C}}$ that have vertices in $V'= V\left(H_{\overrightarrow{C}}\right) \setminus \{ v_{2^k} \} = \{ v_1, v_2,
\ldots, v_{2^k-1} \}$. That is, $H'=H(\overrightarrow{\mathcal{G}}_{k})$ and $ V'$ is the set of
vertices of $\overrightarrow{C}_{2^k} \setminus \{\ra{e_1}, \ra{e_2}\}$. Then by Theorem \ref{goodarcs}, we have $E(H')$ is a
set of admissible edges for the graph $\overrightarrow{C}_{2^k} \setminus \{\ra{e_1}, \ra{e_2}\}$ if
and only if $\chi_r(\overrightarrow{C}_{2^k} \setminus \{e_1, e_2\} \cup H') = k$.  The vertices of
$\overrightarrow{C}_{2^k} \setminus \{e_1, e_2\} \cup H'$ have the same labels as the vertices $V'$.

We now prove that $\chi_r(\overrightarrow{C}_{2^k} \cup H_{\overrightarrow{C}})=k+1 $. Let $\overrightarrow{e}$ be an edge
in $H_{\overrightarrow{C}}\setminus H'$. That is,
\[
    \overrightarrow{e} \in \left\{\overrightarrow{e} \mid \overrightarrow{e} \not\in \overrightarrow{C}_{2^k}
    \text{ with } \ra{v_{2^k} v_i} \text{ where } i \in \{2, \ldots, 2^k-2 \} \right\}.
\]
Therefore, the vertices of $\overrightarrow{e}$ are $v_{2^k}$ and
$v_i$ for some $2 \le i \leq 2^k-2$. From definition of the labeling
function $f$ we know that $f(v_{2^k})=k+1$ and $f(v_i)< k+1$. We
do not create any new paths in $\ra{\Omega}_{2^k}$ connecting vertices with label
$k+1$. This implies that the number of labels does not increase. This proves part 1.

{\bf Part 3:} We consider the sets of edges $W$ and $W'$ defined as follows:
\[ W:=\{\ra{v_{2^{k}}v_i}| i= 2, \ldots, 2^{k}-2\} \text{ and } W':= \{ \ra{v_{2^{k}}v_{1}} ,\ra{v_{2^{k}-1}v_{2^k}} \}.\]
The cardinality of $W$ is $2^k-3$. From Proposition \ref{lemma:recursiveproperty}  part 1 we know that all
admissible edges for $\overrightarrow{P}_{2^k-1}$ are also admissible for $\overrightarrow{C}_{2^k}$.
Therefore, the maximum number of edges that can be added to $\overrightarrow{C}_{2^k}$ without changing
its rank number is equal to maximum number of edges that can be added to $\overrightarrow{P}_{2^k-1}$ plus all edges in $W$. Thus,
the total number of edges in $ \overrightarrow{\Omega}_{2^{k}}$ is equal to the number of admissible edges for
$\overrightarrow{P}_{{2^k}-1}$ and the number of edges in  $\overrightarrow{P}_{{2^k}-1}$, plus the
number of edges in $W\cup W'$.  Therefore, the number of edges in $\overrightarrow{\Omega}_{2^{k}}$ is $3^{k}-2^{k}$.

{\bf Part 5:} This proof is straightforward by counting the number of edges that are admissible for
$\overrightarrow{P}_{2^k-1}$ and adding the number of edges in $W$.

\end{proof}

\begin{corollary} \label{uniqueranks}  The graphs $\overrightarrow{\mathcal{G}}_{k}$,
$\overleftarrow{\mathcal{G}}_{k}$, $\overrightarrow{\Omega}_{2^{k}}$, $\overleftarrow{\Omega}_{2^{k}}$ have unique minimal rankings.
\end{corollary}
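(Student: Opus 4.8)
The plan is to handle the four graphs in two groups. For $\overrightarrow{\mathcal{G}}_{k}$ and $\overleftarrow{\mathcal{G}}_{k}$ there is nothing new to establish: part~1 of Proposition~\ref{lemma:recursiveproperty} and part~1 of Proposition~\ref{lemma:recursiveproperty2} each already assert that the corresponding graph has a \emph{unique} minimum labeling, so I would simply quote those two statements. Hence the whole content of the corollary lives in the two cyclic graphs $\overrightarrow{\Omega}_{2^{k}}$ and $\overleftarrow{\Omega}_{2^{k}}$. I would prove the type~I case in full and then remark that the type~II case is word-for-word the same once Proposition~\ref{lemma:recursiveproperty} is replaced by Proposition~\ref{lemma:recursiveproperty2} and $H(\overrightarrow{\mathcal{G}}_{k})$ by $H(\overleftarrow{\mathcal{G}}_{k})$.

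For $\overrightarrow{\Omega}_{2^{k}}$, let $g$ be a minimal ranking realizing $\chi_r(\overrightarrow{\Omega}_{2^{k}})=k+1$ (part~1 of Proposition~\ref{goodarcs}). The structural input I would reuse is the decomposition from the proof of Proposition~\ref{goodarcs}: after removing the two cycle edges $\ra{v_{2^{k}-1}v_{2^{k}}}$ and $\ra{v_{2^{k}}v_{1}}$ and deleting $v_{2^{k}}$, what remains on $\{v_{1},\ldots,v_{2^{k}-1}\}$ is exactly $\overrightarrow{\mathcal{G}}_{k}$, because the surviving added edges are precisely $H'=H(\overrightarrow{\mathcal{G}}_{k})$. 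The first step is to show that $v_{2^{k}}$ carries the unique largest label $k+1$. Granting this, observe that no directed path joining two vertices of equal label can pass through $v_{2^{k}}$, since that vertex already displays the top label; hence every directed path that could witness a ranking violation, or a failure of minimality under a label decrease, is confined to the induced copy of $\overrightarrow{\mathcal{G}}_{k}$. Consequently $g|_{\{v_{1},\ldots,v_{2^{k}-1}\}}$ is not merely a valid $k$-ranking of $\overrightarrow{\mathcal{G}}_{k}$ but a minimal one, so part~1 of Proposition~\ref{lemma:recursiveproperty} forces it to coincide with the unique minimum labeling of $\overrightarrow{\mathcal{G}}_{k}$. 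Combined with $g(v_{2^{k}})=k+1$, this pins down $g$ on every vertex, which is the uniqueness sought.

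The main obstacle is precisely the pinned top label, that is, the claim $g(v_{2^{k}})=k+1$, and I expect this to be the delicate point. The easy half is that the added edges $\ra{v_{2^{k}}v_{i}}$ for $2\le i\le 2^{k}-2$, together with the two cycle edges at $v_{2^{k}}$, make $v_{2^{k}}$ adjacent to every other vertex; therefore in any valid labeling the label of $v_{2^{k}}$ differs from all the rest, since a shared label would produce a one-edge monochromatic directed path with no interior vertex. The harder half is to exclude the possibility that this distinguished label is some value strictly below $k+1$. This is where the full edge set, and not merely the cycle, has to be used: the rotationally asymmetric bundle of admissible arcs leaving $v_{2^{k}}$ is what rules out the rival labelings that the bare cycle would otherwise permit. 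Concretely, I would argue that any smaller choice for the distinguished label either leaves some value in $\{1,\ldots,k\}$ unavailable to $\{v_{1},\ldots,v_{2^{k}-1}\}$, which is incompatible with those vertices inducing $\overrightarrow{\mathcal{G}}_{k}$ together with its forced labeling, or else violates minimality at $v_{2^{k}}$ itself. Carrying out this elimination carefully, and verifying the analogous statement for the type~II edge set of $\overleftarrow{\Omega}_{2^{k}}$, is the portion of the proof that demands the most care.
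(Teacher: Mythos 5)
Your plan for $\overrightarrow{\mathcal{G}}_{k}$ and $\overleftarrow{\mathcal{G}}_{k}$ is exactly what the paper intends (the paper prints this corollary with no proof at all): uniqueness for those two graphs is literally part~1 of Propositions~\ref{lemma:recursiveproperty} and~\ref{lemma:recursiveproperty2}, so citing them is all there is to do. Your two structural observations about $\overrightarrow{\Omega}_{2^{k}}$ are also correct: $v_{2^{k}}$ is adjacent to every other vertex, so its label is never repeated, and the induced subgraph on $\{v_{1},\ldots,v_{2^{k}-1}\}$ is exactly $\overrightarrow{\mathcal{G}}_{k}$.

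The genuine gap is the step you yourself flagged as delicate: the claim that every minimal $(k+1)$-ranking $g$ of $\overrightarrow{\Omega}_{2^{k}}$ must have $g(v_{2^{k}})=k+1$. That claim is false, so no amount of care can complete your elimination argument. Rotate the standard ranking halfway around the cycle, i.e., swap the labels of $v_{2^{k-1}}$ and $v_{2^{k}}$: set $g(v_{2^{k}})=k$, $g(v_{2^{k-1}})=k+1$, and give every other $v_{j}$ its standard label $\alpha+1$, where $2^{\alpha}$ is the largest power of $2$ dividing $j$. Every label that occurs more than once is at most $k-1$; a directed path joining two equally labeled vertices either stays inside the induced copy of $\overrightarrow{\mathcal{G}}_{k}$, where $g$ is an order-isomorphic copy (labels $\{1,\ldots,k-1,k+1\}$ in place of $\{1,\ldots,k\}$) of the unique ranking of Proposition~\ref{lemma:recursiveproperty}, hence has a larger interior label; or it wraps around through $v_{2^{k}}$, whose label $k$ exceeds every repeated label. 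So $g$ is a valid ranking with $k+1=\chi_r(\overrightarrow{\Omega}_{2^{k}})$ labels, distinct from the standard one. Concretely for $k=2$: $\overrightarrow{\Omega}_{4}$ has edges $\ra{v_{1}v_{2}},\ra{v_{2}v_{3}},\ra{v_{3}v_{4}},\ra{v_{4}v_{1}},\ra{v_{4}v_{2}}$, and both $(1,2,1,3)$ and $(1,3,1,2)$ are minimal $3$-rankings; they are not even related by a graph automorphism, since $\ra{v_{2}v_{4}}$ is not an edge. The same swap works for $\overleftarrow{\Omega}_{2^{k}}$. Consequently the corollary, read literally as uniqueness of the labeling function, is itself false for the two Omega graphs --- it survives only in the weaker sense that the label pattern on the underlying cycle is unique up to rotation --- so the difficulty you sensed is real: your proof cannot be repaired without first weakening the statement, and the paper's unproved assertion needs the same qualification.
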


\section{Admissible Graphs for Directed Paths and Cycles} \label{sec:admissible_graphs}

In this section, we explore constructing new graphs by attaching directed paths and directed cycles to the direct sum graphs and the
omega graphs built in the previous sections. We give algorithms for labeling the new resulting graphs. The algorithms keep the same
rank number as the original graph. Thus, the rank number of the graphs constructed here is either the rank number of  $\overrightarrow{P}_{2^{k}-1}$ or of
$\overrightarrow{C}_{2^k}$.

Finding the rank number of a given graph is a hard problem, even for simple graphs. In the previous sections, we took a known graph
with known rank number, and we built a new graph that  preserves the rank number and as well the set of vertices. In this section, we
explore the same idea, but without preserving the set of vertices. Thus, we give some results on how to build new graphs from a base
graph such that the new graph is larger than the original in terms of the number of vertices and preserves the rank number of the base graph.

Recall from Section \ref{sec:preliminaries} that $\overrightarrow{P}_{2^k-1}$ or $\overrightarrow{C_{2^k}}$ have vertex sets
$V= \{v_1, v_2, \ldots, v_{2^k-1}\}$ and $V \cup \{v_{2^k}\}$, respectively. We construct a new graph by attaching a directed path or a directed cycle
to the vertex $v_i$ for some $i$. Let $\{w_1, w_2, \ldots, w_j=v_i\}$ be the edges of a directed path of length $j$ that is attached to
$\overrightarrow{P}_{2^k-1}$ or $\overrightarrow{C_{2^k}}$ at the vertex $v_i$. The path is denoted by $\overrightarrow{P_{j}}^i$ if its edges are
directed as  $w_{l}\rightarrow w_{l+1}$, and the path is denoted by $\overleftarrow{P_{j}}^i$ if its edges are directed as $w_{l}\leftarrow w_{l+1}$.
Notice that the edges of $\overrightarrow{P}_{2^k-1}$ and $\overrightarrow{C_{2^k}}$ are oriented as defined in Section \ref{sec:preliminaries}.

From Sections \ref{sec:admissible_edges1} and \ref{sec:admissible_edges2} we know that
$\overrightarrow{\mathcal{G}}_{k}:=\overrightarrow{P}_{2^{k}-1} \cup H(\overrightarrow{\mathcal{G}}_{k})$  and
$\overleftarrow{\mathcal{G}}_{k}:=  \overrightarrow{P}_{2^{k}-1} \cup H(\overleftarrow{\mathcal{G}}_{k})$
with vertices $V$. We also know that $\overrightarrow{\Omega}_{2^{k}}:= \overrightarrow{C}_{2^{k}} \cup H_{\overrightarrow{C}}$
and $\overleftarrow{\Omega}_{2^{k}}:=  \overrightarrow{C}_{2^{k}} \cup H_{\overleftarrow{C}}$ with vertices $V \cup \{v_{2^k}\}$ where
$H_{\overrightarrow{C}}$ is as in (\ref{rightHC}) and $H_{\overleftarrow{C}}$ is as in (\ref{leftHC}) .

We say that a directed graph $ G$ is \emph{admissible} for a directed graph $\Gamma$ if $\chi_r\left( \Gamma \cup G \right) = \chi_r(\Gamma)$, and $G$
is \emph{forbidden} for $\Gamma$ if $\chi_r(\Gamma \cup G) > \chi_r(\Gamma)$.  As an example of these admissible  graphs and Lemma \ref{lemma_directcaterpillar},
see Figure \ref{figure3}.

\begin{figure} [htbp]
\begin{center}
\includegraphics[width=80mm]{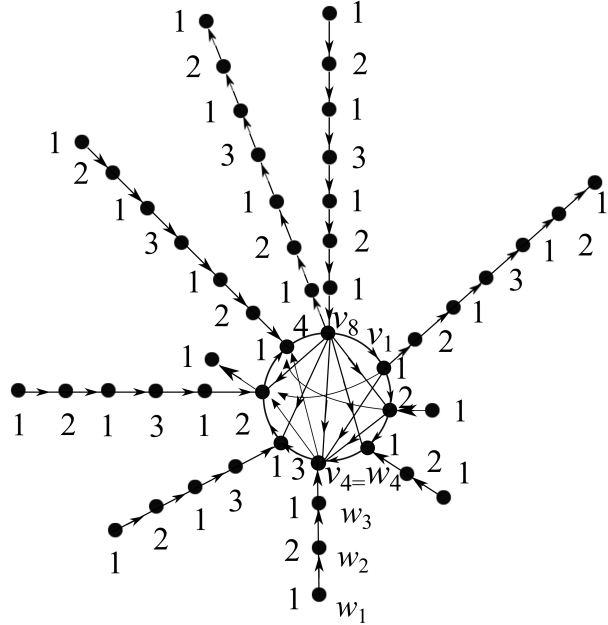}
\end{center}
\caption{A directed cycle with admissible directed paths.}
\label{figure3}
\end{figure}

\begin{lemma} \label{lemma_directcaterpillar} If $G_1$ is either $\overrightarrow{\mathcal{G}}_{k}$ or  $\overleftarrow{\mathcal{G}}_{k}$, and $G_2$ is either
$\ra{\Omega}_{2^{k}}$ or $\la{\Omega}_{2^{k}}$, then

\begin{enumerate}
\item the path $\overrightarrow{P_{j}}^i$ with $1<j\le i \le 2^k-1$ is an admissible  graph for $G_1$ and
\[\chi_r(G_1 \cup \overrightarrow{P_{j}}^i)= \chi_r(\overrightarrow{P}_{2^{k}-1})=k,\]

\item the path $\overleftarrow{P_{j}}^i$ with $1\le j \le  2^k-i$ is an admissible  graph for $G_1$ and
\[\chi_r(G_1 \cup \overleftarrow{P_{j}}^i)= \chi_r(\overrightarrow{P}_{2^{k}-1})=k,\]

\item the path $\overrightarrow{P_{j}}^i$ with $1<j\le i \le 2^k$ is an admissible  graph for $G_2$ and
\[\chi_r(G_2 \cup \overrightarrow{P_{j}}^i)= \chi_r(\overrightarrow{C}_{2^{k}})=k+1, \text{ and }\]

\item the path $\overleftarrow{P_{j}}^i$ with $1\le j \le  2^k-i$ is an admissible  graph for $G_2$ and
\[\chi_r(G_2 \cup \overleftarrow{P_{j}}^i)= \chi_r(\overrightarrow{C}_{2^{k}})=k+1.\]
\end{enumerate}
\end{lemma}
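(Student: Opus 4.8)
The plan is to handle all four parts uniformly by extending the given minimal ranking $f$ of the base graph to the attached path without ever introducing a new label. Here the base graph is $G=G_1$ (so $\chi_r=k$ by Propositions~\ref{lemma:recursiveproperty} and \ref{lemma:recursiveproperty2}) or $G=G_2$ (so $\chi_r=k+1$ by Proposition~\ref{goodarcs}). First I would dispose of the lower bound: since $G_1\supseteq\ra{P}_{2^k-1}$ and $G_2\supseteq\ra{C}_{2^k}$, and since the restriction of a ranking to a subgraph is again a ranking (a directed path in the subgraph is a directed path in the whole graph, and its larger intermediate label lies in the subgraph), we get $\chi_r(G\cup\ra{P_j}^i)\ge\chi_r(\ra{P}_{2^k-1})=k$ and $\chi_r(G\cup\la{P_j}^i)\ge k+1$ in the respective cases. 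Thus it suffices to exhibit a ranking of the augmented graph that uses only labels already present.

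The construction is a \emph{parallel copy} of the labels along a genuine directed subpath of the base. For the forward paths in parts (1) and (3) the attached edges are oriented $w_1\to\cdots\to w_{j-1}\to w_j=v_i$, so the new vertices flow into $v_i$; I would set $f(w_t):=f(v_{i-j+t})$ for $1\le t\le j-1$, which is well defined exactly because $j\le i$ guarantees that $v_{i-j+1}$ exists. For the backward paths in parts (2) and (4) the edges are oriented $v_i=w_j\to w_{j-1}\to\cdots\to w_1$, so the new vertices flow out of $v_i$; here I would set $f(w_{j-s}):=f(v_{i+s})$ for $1\le s\le j-1$, which is well defined because $j\le 2^k-i$ keeps every index at most $2^k-1$. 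In the $G_2$ cases this last bound is precisely what keeps the index strictly below $2^k$, so the unique maximal label $k+1$ carried by $v_{2^k}$ is never copied and the number of labels stays $k+1$. In every case the new tail reads off exactly the label sequence of an actual directed subpath $v_{i-j+1}\to\cdots\to v_i$ or $v_i\to\cdots\to v_{i+j-1}$ of the base.

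To verify that the extended $f$ is a ranking, I would observe that the only monochromatic directed paths needing attention are those meeting a new vertex, and that each is label-isomorphic to a directed walk already present in $G$. The structural point is that the tail attaches to $G$ only through $v_i$. In the forward case the tail has a single exit edge $w_{j-1}\to v_i$ and no edge entering it from $G$, so any directed path meeting the tail begins at some $w_s$, reads labels $f(v_{i-j+s}),f(v_{i-j+s+1}),\dots$, and then continues into $G$; in the backward case the tail has a single entry edge $v_i\to w_{j-1}$ and its vertices are sinks, so any directed path meeting the tail ends inside it, preceded by a directed walk in $G$ reaching $v_i$. Concatenating with the matching base subpath yields a directed walk in $G$ with the identical label sequence; extracting a simple directed path from this walk and using that $f$ already ranks $G$ produces the required larger intermediate label, which therefore also lies on the path in the augmented graph. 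A contradiction with admissibility would follow otherwise, completing the upper bound and hence the equalities claimed.

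The main obstacle is this verification step, and most delicately the $G_2$ cases, where the directed cycle lets a walk out of the tail wrap around. The care required is to confirm that replacing the wrapped walk by a simple directed path cannot destroy the ``no larger label in between'' property (deleting vertices only removes candidate large labels while keeping the endpoints), and that the hypothesis $j\le 2^k-i$ prevents the parallel copy from ever reusing the label $k+1$ of $v_{2^k}$. Once these two points are pinned down, the cycle introduces no further difficulty and the four parts follow from the single argument above.
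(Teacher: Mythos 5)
Your labeling construction is exactly the paper's (the label-copying rules that the paper packages as Algorithms~\ref{alg:label_right_paths} and \ref{alg:label_left_paths}), and your verification is sound for the type~I graphs: in $\overrightarrow{\mathcal{G}}_{k}$ every edge goes from a lower to a higher index, so the image of a purported violating path under your label-isomorphism is automatically a \emph{simple} directed path of the base graph, and in $\overrightarrow{\Omega}_{2^{k}}$ the only index-decreasing edges leave $v_{2^k}$, whose label $k+1$ exceeds every label your copy ever places on the tail, so wrap-around paths are never violating. The genuine gap is in the type~II cases $G_1=\overleftarrow{\mathcal{G}}_{k}$ and $G_2=\overleftarrow{\Omega}_{2^{k}}$, which you fold into the same argument. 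Those graphs contain directed cycles carrying no large label: the two-cycles $v_a\to v_{a+1}\to v_a$, and shortcut edges such as $v_7\to v_4$ in $\overleftarrow{\mathcal{G}}_{3}$. Consequently the directed walk you build in $G$ can be \emph{closed}: the path $w_1\to w_2\to w_3\to v_7\to v_4$ in $\overleftarrow{\mathcal{G}}_{3}\cup\overrightarrow{P_{4}}^{7}$ (labels $3,1,2,1,3$) maps to the closed walk $v_4\to v_5\to v_6\to v_7\to v_4$. Extracting a simple directed path from a closed walk leaves only the trivial path, with no pair of \emph{distinct} equal-labeled endpoints, so no contradiction with the ranking of $G$ can be drawn. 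Your safeguard (``deleting vertices only removes candidate large labels while keeping the endpoints'') does not address this degenerate case, and it is precisely the case that occurs; you also located the wrap-around danger only in the $G_2$ cases, overlooking that $\overleftarrow{\mathcal{G}}_{k}$ already has directed cycles.

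The gap is not repairable, because the statement is false in the type~II cases, so your uniform argument cannot possibly close. Take $k=2$ and $G_1=\overleftarrow{\mathcal{G}}_{2}$, with edges $v_1\to v_2$, $v_2\to v_3$, $v_2\to v_1$, $v_3\to v_2$; its unique $2$-ranking is $f(v_1)=1$, $f(v_2)=2$, $f(v_3)=1$ (Proposition~\ref{lemma:recursiveproperty2}, part 1, or a direct check). Attach $\overrightarrow{P_{2}}^{3}$, which is allowed since $1<2\le 3\le 2^2-1$: one new vertex $w_1$ and the single edge $w_1\to v_3$. Any $2$-ranking of the union restricts to a $2$-ranking of $\overleftarrow{\mathcal{G}}_{2}$, hence to the unique one above, and then $w_1$ admits no label: label $1$ fails on the edge $w_1\to v_3$, and label $2$ fails on the path $w_1\to v_3\to v_2$, whose only interior vertex is labeled $1$. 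Therefore $\chi_r\bigl(\overleftarrow{\mathcal{G}}_{2}\cup\overrightarrow{P_{2}}^{3}\bigr)\ge 3>2=k$, contradicting part 1; the mirror example $\overleftarrow{P_{2}}^{1}$ attached at $v_1$ (use the path $v_2\to v_1\to w_1$) contradicts part 2, and the same mechanism inside $\overleftarrow{\Omega}_{2^{k}}$ breaks parts 3 and 4 for that choice of $G_2$. For comparison, the paper's own proof writes out only the case $G_1=\overrightarrow{\mathcal{G}}_{k}$ --- the same copied labeling, with essentially no verification --- and declares the remaining cases ``similar''; your proposal follows the identical route but attempts the verification honestly, and it is exactly that verification step which fails, and must fail, on the type~II graphs.
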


\begin{proof} We prove parts 1 and 2. Parts 3 and 4 are similar, and we omit the proofs. For both parts 1 and 2,  we prove the case
$G_1 = \overrightarrow{\mathcal{G}}_{k}$. Since the case  $G_1 = \overleftarrow{\mathcal{G}}_{k}$ is similar, it is omitted. We recall that
$V(\overrightarrow{\mathcal{G}}_{k})=\{v_{1} ,v_2, \ldots, v_{i}, v_{i+1}, \ldots , v_{2^k-1}\}$ is the set of vertices of  $\overrightarrow{\mathcal{G}}_{k}$ and that the
edges of $\overrightarrow{\mathcal{G}}_{k}$ are directed as  $v_{t}\rightarrow v_{t+1}$ for $t<2^{k}-1$.

{\bf Part 1.}  Suppose that $\{w_{1},\ldots,w_j=v_i\}$ is set of vertices of $\overrightarrow{P_{j}^i}$ for some fixed $1<j\le i \le 2^k-1$ where
$\overrightarrow{P_{j}^i}$  is the path attached to the vertex $v_i  \in G_1$ with edges directed as $w_{t}\rightarrow w_{t+1}$.

Corollary  \ref{uniqueranks} guarantees that $G_1$ has a unique minimal ranking. Let a minimal ranking function be
$f:V(G_1)\rightarrow\{1,2,\ldots,k\}$. Since
$1<j\le i \le 2^k-1$, we can label the vertices $\overrightarrow{P_{j}^i}$ with the labels given by the ranking function $f$ without
increasing the rank number of the new graph. Algorithm \ref{alg:label_right_paths} allows us to do it. (See for example Figure \ref{figure3}.)

\begin{algorithm}[htbp]
\begin{algorithmic}
    \For{$t = 0 \to j-1$}
        \State $f(w_{j-t})=f(v_{i-t});$
        \State $t \gets t + 1$
    \EndFor
\end{algorithmic}
\caption{Labeling vertices of $\protect\overrightarrow{P_{j}^i}$}
\label{alg:label_right_paths}
\end{algorithm}

Using Algorithm \ref{alg:label_right_paths} we obtain
\[ f(w_j)=f(v_i), \quad  f(w_{j-1})=f(v_{i-1}), \quad f(w_{j-2}) =f(v_{i-2}),\ldots,  \mbox{ and } \;\; f(w_1)=f(v_{i-j+1}).\]
 Therefore,  we have a ranking function $f:V(G_1 \cup \overrightarrow{P_{j}}^i)\rightarrow\{1,2,\ldots,k\}$ that labels all vertices of
 $G_1 \cup \overrightarrow{P_{j}}^i$ without increase the rank number.  This proves that
 $\chi_{r}(G_1 \cup \overrightarrow{P_{j}}^i) = \chi_r(\overrightarrow{P}_{2^{k}-1})=k$.

{\bf Part 2.}  Suppose that $\{w_{1},\ldots,w_j=v_i\}$ is the set of vertices of $\overleftarrow{P_{j}^i}$ for some fixed $1\le j \le 2^k-i$ where
$\overleftarrow{P_{j}^i}$  is the path attached to the vertex $v_i  \in G_1$ with edges directed as $w_{t}\leftarrow w_{t+1}$.

Corollary  \ref{uniqueranks} guarantees that $G_1$ has a unique minimal ranking.  Let a minimal ranking function be
$f:V(G_1)\rightarrow\{1,2,\ldots,k\}$.
Since  $1\le j\le  2^k-i$, we can label the vertices of $\overleftarrow{P_{j}^i}$ with the labels given by the ranking function $f$.
This labeling will not increase the rank number of the new graph. Algorithm \ref{alg:label_left_paths} allows us to do it. (See for example Figure \ref{figure3}.)

\begin{algorithm}[htbp]
\begin{algorithmic}
    \For{$t = 0 \to j-1$}
        \State $f(w_{j-t})=f(v_{i+t})$
        \State $t \gets t + 1$
    \EndFor
\end{algorithmic}
\caption{Labeling vertices of $\protect\overleftarrow{P_{j}^i}$}
\label{alg:label_left_paths}
\end{algorithm}

Using Algorithm \ref{alg:label_left_paths} we obtain
\[ f(w_j)=f(v_i), \quad f(w_{j-1})=f(v_{i+1}), \quad f(w_{j-2}) =f(v_{i+2}),\ldots, \mbox{ and }  \;\; f(w_1)=f(v_{i+j-1}).\]
Therefore,  we have a ranking function $f:V(G_1 \cup \overleftarrow{P_{j}}^i)\rightarrow\{1,2,\ldots,k\}$ that labels all vertices of
$G_1 \cup \overleftarrow{P_{j}}^i$ without increasing the rank number.  That is,
$\chi_{r}(G_1 \cup \overleftarrow{P_{j}}^i) = \chi_r(\overrightarrow{P}_{2^{k}-1})=k$.
\end{proof}

\begin{proposition} \label{directcaterpillar} If $G_1$ is either $\overrightarrow{\mathcal{G}}_{k}$ or  $\overleftarrow{\mathcal{G}}_{k}$,
and $G_2$ is either $\ra{\Omega}_{2^{k}}$ or $\la{\Omega}_{2^{k}}$, then

\begin{enumerate}
\item $\displaystyle{\chi_r\left(G_1\cup \bigcup_{1<j\le i \le 2^k-1} \overrightarrow{P_{j}}^i   \bigcup_{1\le j \le  2^k-i} \overleftarrow{P_{j}}^i\right)= \chi_r(\overrightarrow{P}_{2^{k}-1})=k}$, and

\item
$\displaystyle{\chi_r\Bigg(G_2 \cup \bigcup_{1<j\le i \le 2^k} \overrightarrow{P_{j}}^i  \bigcup_{{\begin{array}{c}
  1< j \le  2^k-i;   \\
  i <  2^k
\end{array}}}
\overleftarrow{P_{j}}^i  \Bigg)= \chi_r(\overrightarrow{C}_{2^{k}})=k+1}$.

\end{enumerate}
\end{proposition}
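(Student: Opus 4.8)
The plan is to exhibit an explicit ranking of the augmented graph that uses only the labels $\{1,\dots,k\}$ in Part 1 (resp.\ $\{1,\dots,k+1\}$ in Part 2) and to verify that it is a valid ranking; together with the trivial lower bound this gives equality. For the lower bound, note that $G_1$ (resp.\ $G_2$) is a subgraph of the augmented graph obtained by deleting all the attached paths, so any ranking of the augmented graph restricts to a ranking of the base; hence $\chi_r\ge\chi_r(G_1)=k$ (resp.\ $\chi_r(G_2)=k+1$) by Corollary \ref{uniqueranks} together with Propositions \ref{lemma:recursiveproperty}, \ref{lemma:recursiveproperty2} and \ref{goodarcs}.

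For the upper bound I would start from the unique minimal ranking $f$ of $G_1$ (resp.\ $G_2$) guaranteed by Corollary \ref{uniqueranks}, and extend it to every attached path by Algorithms \ref{alg:label_right_paths} and \ref{alg:label_left_paths}, exactly as in Lemma \ref{lemma_directcaterpillar}. Since each algorithm only copies labels already present on the base, the extended $f$ still takes values in $\{1,\dots,k\}$ (resp.\ $\{1,\dots,k+1\}$), so it suffices to prove that this $f$ is a ranking of the whole augmented graph. The key consistency observation is that the many attached paths are labelled compatibly: the vertices of $\overrightarrow{P_j}^i$ receive, in order, the labels of the base segment $v_{i-j+1},\dots,v_i$, and the vertices of $\overleftarrow{P_j}^i$ receive the labels of $v_i,\dots,v_{i+j-1}$. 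Thus each attached path is a label-preserving copy of a directed subpath of the base, glued at its attachment vertex in the orientation-respecting way: a forward pendant is entered only at its tip $v_i$, while a backward pendant is left only at its tip $v_i$.

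The main step is then to analyse an arbitrary directed path $P$ in the augmented graph joining two vertices $x,y$ with $f(x)=f(y)$, and to exhibit a strictly larger label between them. I would first argue, from the orientation of the pendants, that $P$ splits as an initial stretch inside one forward pendant, a middle stretch inside the base, and a final stretch inside one backward pendant, any of which may be empty; in particular a base-to-base directed path never uses a pendant edge and is handled directly by the validity of the base ranking. Introducing the retraction $\rho$ that sends each pendant vertex to the base vertex whose label it copies, $P$ maps to a directed \emph{walk} in the base between $\rho(x)$ and $\rho(y)$, which again satisfy $f(\rho(x))=f(\rho(y))$. Extracting a simple directed path from this walk and invoking minimality of the base ranking produces a vertex of larger label; since every vertex of the walk pulls back under $\rho$ to a vertex of $P$ lying strictly between $x$ and $y$, this larger label is witnessed on $P$ itself.

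The hard part, and where I expect the real work to concentrate, is precisely the ``through'' paths that start in a forward pendant, cross the base, and end in a backward pendant. Here one must ensure that the retracted base walk actually furnishes an admissible base path to which the ranking property applies, and in particular dispose of the degenerate situation $\rho(x)=\rho(y)$, in which the two endpoints copy the same base vertex and the ranking property cannot be invoked on a single vertex. For the forward-oriented bases $\overrightarrow{\mathcal{G}}_k$ and $\overrightarrow{\Omega}_{2^k}$ this is clean: all base edges increase the vertex index, so the base is acyclic, the retracted walk is automatically a simple monotone path, and the degenerate case cannot arise for genuinely internal pendant endpoints. For $\overleftarrow{\mathcal{G}}_k$ and $\overleftarrow{\Omega}_{2^k}$, whose reverse edges all emanate from the high-labelled dyadic hub vertices $v_{2^t}$, one must instead exploit this hub structure, equivalently the explicit standard ranking $f(v_i)=1+\nu_2(i)$ with $\nu_2$ the $2$-adic valuation, to locate the required larger label on the middle stretch. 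This is the one point that does not follow formally from Lemma \ref{lemma_directcaterpillar} and deserves the most care.
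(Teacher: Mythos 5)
Your proposal is correct and follows essentially the same route as the paper: extend the unique minimal ranking of the base via Algorithms \ref{alg:label_right_paths} and \ref{alg:label_left_paths}, decompose a hypothetical violating directed path into a forward-pendant stretch, a base stretch, and a backward-pendant stretch, and fold the pendant stretches back onto the base (your retraction $\rho$ is exactly the paper's explicit construction of the path $P'$ with vertices $v_{i-(j-1)},\ldots,v_{l+r}$) to contradict the validity of the base ranking. The additional care you give to the degenerate case $\rho(x)=\rho(y)$ and to non-monotone middle stretches in $\overleftarrow{\mathcal{G}}_{k}$ via the hub vertices $v_{2^t}$ is a refinement of, not a departure from, the paper's argument, since the paper proves only the $\overrightarrow{\mathcal{G}}_{k}$ case and dismisses the others as similar.
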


\begin{proof}  We prove part 1 for the case $G_1= \overrightarrow{\mathcal{G}}_{k}$. The other case where $G_1= \overleftarrow{\mathcal{G}}_{k}$
and part 2 are similar and thus omitted.  From Lemma \ref{lemma_directcaterpillar} part 1 we know that
$\chi_r( G_1)=\chi_r(\overrightarrow{P}_{2^{k}-1})=k$. Let $f:V(G_1)\rightarrow\{1,2,\ldots,k\}$ be the  minimal ranking of
$G_1=\overrightarrow{\mathcal{G}}_{k}$. Using Algorithms \ref{alg:label_right_paths} and \ref{alg:label_left_paths}  developed in the proof of Lemma \ref{lemma_directcaterpillar}, we define a
ranking function $f'$ that labels all vertices of all paths of the form of $\overrightarrow{P_{j}^i}$ or of the form $\overleftarrow{P_{j}^i}$
attached to $G_1$.  That is, $f'$ is the function defined by Algorithm \ref{alg:label_right_paths} if the path is of the form $\overrightarrow{P_{j}^i}$ and
$f'$ is the function defined by Algorithm \ref{alg:label_left_paths} if the path is of the form $\overleftarrow{P_{j}^i}$. From those algorithms is easy to see
that $f'(v) \le k$.

Let $f^*:V(D)\rightarrow\{1,2,\ldots,k\}$ be the function defined as
\[f^*(v) = \left\{
  \begin{array}{ll}
        f(v) & \hbox{if  \; $v\in G_1$ }\\
        f'(v),& \hbox{if \; $v\not \in G_1$,}
  \end{array}
\right.
\]
where
\[
D:=G_{1} \cup \bigcup_{1<j\le i \le 2^k-1} \overrightarrow{P_{j}}^i \cup  \bigcup_{1\le j \le  2^k-i} \overleftarrow{P_{j}}^i.
\]
From the definition of $f$ and $f'$ is easy to see that  $f^*(v) \le k$ for $v \in V(D)$.

We now prove that $f^*$ is a ranking function of $D$. That is, we want to prove that given any two vertices in $D$ with the same label, every directed path connecting those
two vertices has  a vertex with larger label.  We prove it by contradiction. Suppose that there are two vertices $u, w \in D$ connected by a directed path $P$
with  $f^*(w)=f^*(u)$ and for every other vertex $v' \in P$, we have $f^*(v') < f^*(u)$.

Let $V(P)= \{w=w_1, w_2, \ldots, w_j=v_i, v_{i+1}, \ldots, v_{l}=u_1, u_2, \ldots, u_r=u  \}$ be the set of vertices of $P$,
where $\{v_i, v_{i+1}, \ldots, v_{l}\}$ are vertices in $G_1$ and $V(P)\setminus  \{v_i, v_{i+1}, \ldots, v_{l}\}$ are vertices in $D \setminus G_1$.
Notice that $j$ and $r$ may be equal to one and that $i$ and $l$  may be equal. We suppose that the edges of $P$ are of the form
$ v_{t}\rightarrow v_{t+1}$, $ w_{t}\rightarrow w_{t+1}$, and $ u_{t}\rightarrow u_{t+1}$.

From  Algorithm \ref{alg:label_right_paths} in the proof of Lemma \ref{lemma_directcaterpillar},  we know that
$f'(w_{j-s})=f(v_{i-s})$ for $s=1, 2, \ldots , j-1$, and from Algorithm \ref{alg:label_left_paths} we know that $f'(u_{j+s})=f(v_{l+s})$ for $s=1, 2, \ldots r-1$.
These imply that there exists a path $P'$ with  vertices
$$V(P')= \{ v_{i-(j-1)}, \ldots, v_{i-1}, v_{i}, v_{i+1}, \ldots, v_{l}, v_{l+1},  \ldots, v_{l+r}\}$$
in $G_1$ satisfying that  $f(v_{i-(j-1)})=f^*(w) = f^*(u)=f(v_{l+r})$ and  $f(v')< f^*(w)$ for $v' \in V(P')\setminus \{v_{i-(j-1)},  v_{l+r}  \}$. That
is contradiction because $f$ is a ranking function of $G_1$ and
$f(v_{i-(j-1)})=f(v_{l+r})$.  This proves that $f^*$ is a ranking function for $D$. The definition of $f^*$ tells us that   $f^*(v) \le k$ for $v \in V(D)$.
Thus, $\chi_r( D)=\chi_r(\overrightarrow{P}_{2^{k}-1})=k$. This proves part 1 with $G_1= \overrightarrow{\mathcal{G}}_{k}$.
\end{proof}

Recall that $(\alpha_{r} \alpha_{r-1} \cdots \alpha_{1}\alpha_{0})_{2}$ with $\alpha_{h} = 0 \text{ or } 1$ for $0 \le h \le r$
is the binary  representation of a positive integer $b$ if $b=\alpha_{r}2^{r}+\alpha_{r-1}2^{r-1} + \cdots+\alpha_{1} 2^{1}+\alpha_{0}2^{0}$. We define $h$ as
$h(b)=j$ if $\alpha_{j}$ is the rightmost nonzero entry of the binary representation of $b$.  Fl\'{o}rez and Narayan \cite{floreznarayan} proved that if $v_m$ is a
vertex of $P_{2^k-1}$ in position  $m$, then $h(m)=f(v_m)$ where $f$ is the ranking function of $\overrightarrow{P}_{2^k-1}$. The same result extends naturally to
directed paths.

Let $W_1$ be a subgraph of a graph $W_2$. A \textit{vertex of attachment} of $W_1$ in $W_2$ is a vertex of $W_1$ that is incident with some edge of
$W_2$ that is not an edge of $W_1$ (for this definition see \cite{tutte} page 11 section I.4).

Let $G'$ be either $\overrightarrow{\mathcal{G}}_{k}$ or $\overleftarrow{\mathcal{G}}_{k}$ with vertices $V(G')=\{v_1, v_2, \ldots , v_{2^k-1}\}$ and let  $G(t)$
be either $\overrightarrow{\Omega}_{2^{h(t)}}$ or $\overleftarrow{\Omega}_{2^{h(t)}}$.  We define
\[\displaystyle{D(t):=G(t) \cup \bigcup_{1<j\le i \le 2^{h(t)}} \overrightarrow{P_{j}}^i \cup  \bigcup_{{\begin{array}{c}
  1< j \le  2^{h(t)}-i   \\
  i <  2^{h(t)}
\end{array}}}
\overleftarrow{P_{j}}^i }\]
for some $t\in \{2, \ldots, 2^k-1 \}$.

Let $$\Gamma:= \displaystyle{G' \cup \bigcup_{t=2}^{2k-1} D(t)}.$$ Notice that $D(t)$  has exactly one vertex of attachment in
$\Gamma$ which is given by  $v_t \in G'$. As an example of this graph and Proposition \ref{directcaterpillar2}, see Figure \ref{figure4}.

\begin{figure} [htbp]
\begin{center}
\includegraphics[width=80mm]{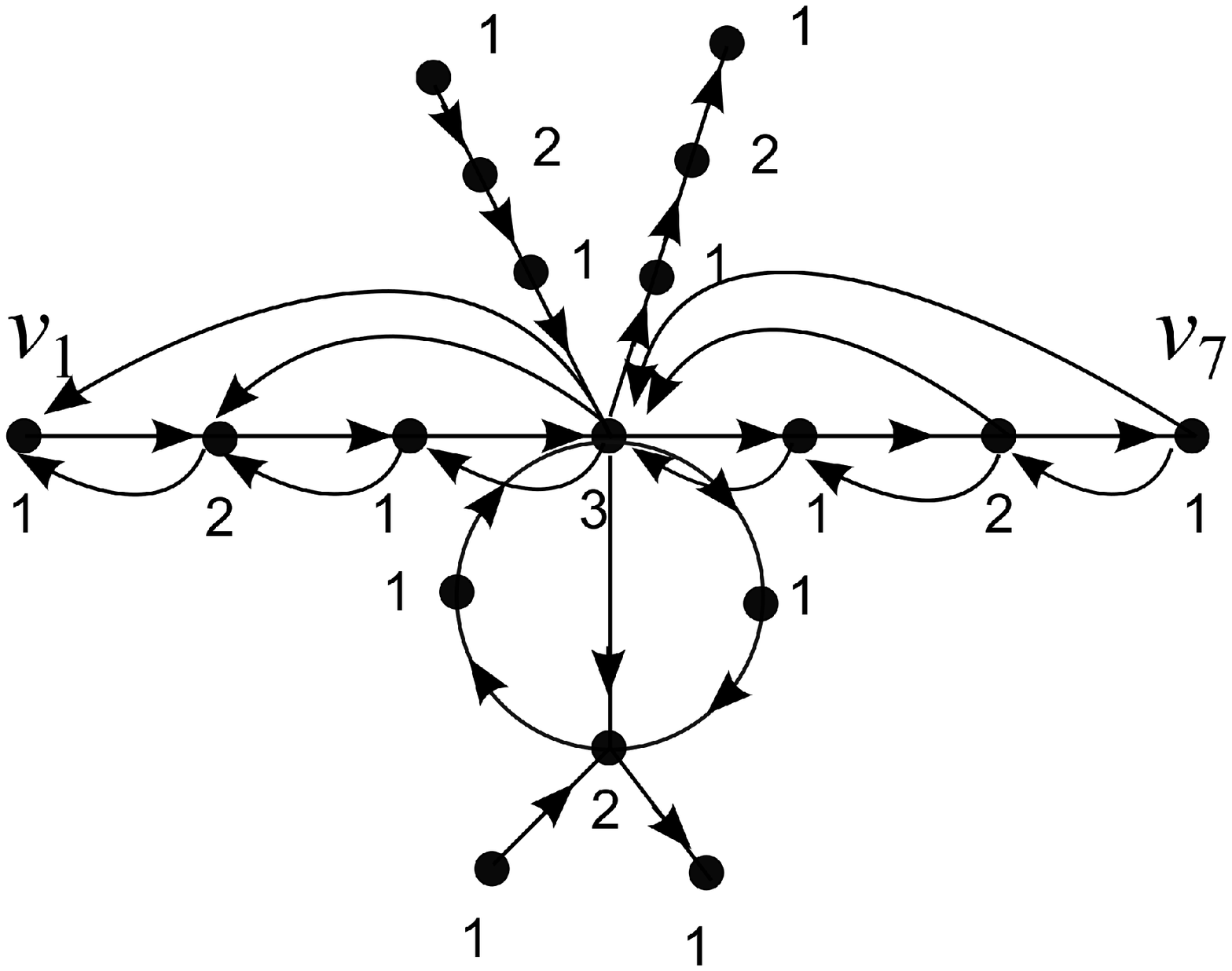}
\end{center}
\caption{A directed path with admissible directed graphs.} \label{figure4}
\end{figure}

Let $\Gamma'$ be the graph formed by $G'$ and the union of a set of graphs $N_t$ for $t\in I $, where $I$ is an index set, such that the graphs $G' $ and $N_t$
intersect exactly in the vertex $v_i \in  V(G')$. That is, $N_t$ has exactly one vertex of attachment $v_i \in \Gamma'$.   Theorem  \ref{general_directcaterpillar} proves that
$\Gamma'$ generalizes Lemma \ref{lemma_directcaterpillar} and Propositions  \ref{directcaterpillar} and \ref{directcaterpillar2}.

\begin{proposition} \label{directcaterpillar2}
If for each $t \in \{2,4, \ldots, 2^k-2  \}$ the set $\{v_t\} \subset \Gamma$ is the maximum set of vertices of attachment of
\[\displaystyle{G(t) \cup \bigcup_{1<j\le i \le 2^{h(t)}} \overrightarrow{P_{j}}^i \cup
 \bigcup_{{\begin{array}{c}
  				1< j \le  2^{h(t)}-i   \\
  				i <  2^{h(t)}
				\end{array}}}
\overleftarrow{P_{j}}^i }\]
in $\Gamma$, then
$\chi_r (\Gamma)= \chi_r(\overrightarrow{P}_{2^{k}-1})=k.$
\end{proposition}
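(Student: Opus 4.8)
The plan is to follow the template of the proof of Proposition \ref{directcaterpillar}, but now to treat each attached gadget
\[
    D(t) = G(t) \cup \bigcup_{1<j\le i \le 2^{h(t)}} \overrightarrow{P_{j}}^i \cup \bigcup_{\substack{1<j\le 2^{h(t)}-i\\ i<2^{h(t)}}} \overleftarrow{P_{j}}^i
\]
as a single block glued to $G'$ at the vertex $v_t$. First I would record the local rankings. By Corollary \ref{uniqueranks} the base $G'$ has a unique minimal ranking $f:V(G')\to\{1,\dots,k\}$, and by Proposition \ref{directcaterpillar} each block satisfies $\chi_r(D(t))=\chi_r(\overrightarrow{C}_{2^{h(t)}})=h(t)+1$ with an associated ranking $g_t$. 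The crucial compatibility is that the attachment vertex $v_t$ is at once (i) the unique maximum-labeled vertex of $D(t)$, with $g_t(v_t)=h(t)+1$, and (ii) a vertex of $G'$ with $f(v_t)=h(t)+1$, because for an even index $t$ the label of $v_t$ in the standard minimal ranking equals one plus the $2$-adic valuation $h(t)$ of $t$. Since these two values coincide, the function
\[
f^{\ast}(v)=\begin{cases} f(v), & v\in V(G'),\\ g_t(v), & v\in V(D(t))\setminus\{v_t\},\end{cases}
\]
is well defined on $V(\Gamma)$.

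Next I would verify $f^{\ast}(v)\le k$ for every $v$. On $G'$ this is immediate, and on $D(t)$ the maximum value attained is $h(t)+1$; since $t\le 2^k-2$ forces $h(t)\le k-1$, we get $h(t)+1\le k$. Hence $f^{\ast}:V(\Gamma)\to\{1,\dots,k\}$.

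The heart of the argument is to show that $f^{\ast}$ is a ranking, and I would argue by contradiction, exploiting the hypothesis that $\{v_t\}$ is the \emph{entire} set of vertices of attachment, i.e. each $v_t$ is a cut vertex joining $D(t)$ to the remainder of $\Gamma$. Suppose a directed path $P$ joins two vertices $u,w$ with $f^{\ast}(u)=f^{\ast}(w)$ while $f^{\ast}(v')<f^{\ast}(u)$ for every interior vertex $v'$ of $P$, and split into cases. If $u,w\in V(G')$, then $P$ cannot visit any $D(t)\setminus\{v_t\}$, since it would have to use the cut vertex $v_t$ twice, which is impossible for a simple path; thus $P$ lies in $G'$ and contradicts the fact that $f$ is a ranking. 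If $u,w$ lie in the same block $D(t)$, the same cut-vertex observation confines $P$ to $D(t)$, contradicting that $g_t$ is a ranking. Finally, if $u$ and $w$ lie in different blocks, then (after relabeling) $u\in D(t)\setminus\{v_t\}$ and $w\notin D(t)$ for some $t$, so $P$ must contain $v_t$; moreover $v_t$ is an \emph{interior} vertex of $P$, for $u\neq v_t$ and $w\neq v_t$ (if $w=v_t$ then $f^{\ast}(w)=h(t)+1$ would force $f^{\ast}(u)=h(t)+1$, impossible for $u\in D(t)\setminus\{v_t\}$ by uniqueness of the maximum). But then $f^{\ast}(v_t)=h(t)+1\ge f^{\ast}(u)$, contradicting that interior labels are strictly below $f^{\ast}(u)$.

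Having exhibited a ranking $f^{\ast}$ into $\{1,\dots,k\}$, I would conclude $\chi_r(\Gamma)\le k$; the reverse inequality follows from $\overrightarrow{P}_{2^k-1}\subseteq G'\subseteq\Gamma$ together with Lemma \ref{rankingofapath}, giving $\chi_r(\Gamma)=k$. The main obstacle I anticipate is the bookkeeping behind the third case: one must confirm that $v_t$ is genuinely the \emph{unique} maximum-label vertex of each $D(t)$, which is exactly why the index ranges $i<2^{h(t)}$ and $j\le 2^{h(t)}-i$ are imposed (they keep every path vertex copying a label from a position strictly below the top vertex $v_{2^{h(t)}}=v_t$), and that the single-attachment hypothesis forces $P$ to traverse $v_t$ as an interior vertex. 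These two facts are precisely what let the label $h(t)+1$ at $v_t$ act as a barrier blocking every forbidden path.
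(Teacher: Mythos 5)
Your proof is correct and takes essentially the same approach as the paper's: glue the standard minimal ranking of $G'$ to the rankings of the blocks $D(t)$, observing that the attachment vertex $v_t$ carries the unique maximum label of $D(t)$ and that this label agrees with $f(v_t)$ in $G'$, so $v_t$ acts as a barrier and the rank number cannot increase. Your write-up is in fact more careful than the paper's terse argument---you make the cut-vertex case analysis and the function $f^{\ast}$ explicit, and you correct the paper's off-by-one slip (it writes $\chi_r(D(t))\le h(t)$ and $f(v_t)=h(t)$ where $h(t)+1$ is what is consistent with Lemma \ref{rankingofapath} and the standard ranking).
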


\begin{proof} From Proposition \ref{directcaterpillar}, we know that $\chi_r( D(t))= \chi_r(\overrightarrow{C}_{2^{h(t)}}) \le h(t)$ where

\[\displaystyle{D(t):=G(t) \cup \bigcup_{1<j\le i \le 2^{h(t)}} \overrightarrow{P_{j}}^i \cup  \bigcup_{{\begin{array}{c}
  1< j \le  2^{h(t)}-i   \\
  i <  2^{h(t)}
\end{array}}}
\overleftarrow{P_{j}}^i}\]
for some $t \in \{2,4, \ldots, 2^k-2  \}.$ From the definition of $D(t)$ we can see that  $v_t = G' \cap D(t)$. Note that  $f(v_t )=h(t)$ and that  every other vertex of $D(t)$ has label
less than $h(t)$. So, attaching $D(t)$ to $G'$ does not increase the rank number of $G'$. Since this argument is true for every $t \in \{2,4, \ldots, 2^k-2  \}$,  it proves that
$\chi_r( \Gamma)= \chi_r(\overrightarrow{P}_{2^{k}-1})=k$.
\end{proof}

\begin{theorem} \label{general_directcaterpillar}  Let $v_i$ be the vertex of attachment  of $N_t$ in $\Gamma'$ for $t$ in an index set $I$ and some $i \in \{2, \ldots, 2^k-2\}$. Suppose that
$\chi_r(N_t)= h(i)$. If there is a ranking function $f_t$ of $N_t$ such that $f_t(v_i)=h(i)$, then $\chi_r(\Gamma')=\chi_r(\overrightarrow{P}_{2^{k}-1} ).$
\end{theorem}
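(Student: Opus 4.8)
The plan is to produce an explicit ranking of $\Gamma'$ by pasting the unique minimal ranking of $G'$ together with the prescribed rankings $f_t$ of the attached pieces $N_t$, and then to squeeze $\chi_r(\Gamma')$ between matching upper and lower bounds. First I would invoke Corollary \ref{uniqueranks} to fix the unique minimal ranking $f\colon V(G')\to\{1,\dots,k\}$ of $G'$, so that $\chi_r(G')=k$ by Propositions \ref{lemma:recursiveproperty} and \ref{lemma:recursiveproperty2}; from the identity $h(m)=f(v_m)$ of Fl\'orez and Narayan recalled above, $f(v_i)=h(i)$, and $h(i)\le k$ for every $i\in\{2,\dots,2^k-2\}$. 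By hypothesis each $N_t$ carries a ranking $f_t$ with $\chi_r(N_t)=h(i)$ and $f_t(v_i)=h(i)$, so $f_t$ uses only labels in $\{1,\dots,h(i)\}\subseteq\{1,\dots,k\}$ and agrees with $f$ at the single shared vertex $v_i$. I would then define $f^{*}\colon V(\Gamma')\to\{1,\dots,k\}$ by $f^{*}=f$ on $G'$ and $f^{*}=f_t$ on each $N_t$; this is well defined because $G'$ and the $N_t$ overlap only in $v_i$, where every labeling equals $h(i)$, and clearly $f^{*}(v)\le k$ throughout.

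The heart of the argument is to check that $f^{*}$ is a ranking of $\Gamma'$. The structural fact I would exploit is that $v_i$ is a cut vertex: deleting it disconnects $G'\setminus\{v_i\}$ from each $N_t\setminus\{v_i\}$, so a directed (hence simple) path cannot enter and then leave a given piece, as that would require visiting $v_i$ twice. I would then argue by contradiction. If $f^{*}$ fails, choose a directed path $P$ from $x$ to $y$ violating the ranking condition (so $f^{*}(x)=f^{*}(y)=c$ and no internal vertex of $P$ exceeds $c$) with the fewest vertices; minimality forces every internal label to lie strictly below $c$. If $x$ and $y$ belong to the same piece, then by the cut-vertex remark $P$ stays inside that piece, contradicting that $f$ (respectively $f_t$) is a ranking there. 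Hence $x$ and $y$ lie in different pieces, so $P$ contains the attachment vertex $v_i$ as an internal vertex. Since $f^{*}(v_i)=h(i)$ and all internal labels are below $c$, this forces $h(i)<c$; but at least one of $x,y$ lies in some $N_t\setminus\{v_i\}$, whose labels never exceed $h(i)$, whence $c\le h(i)$, a contradiction. Thus no violating path exists, $f^{*}$ is a genuine ranking using at most $k$ labels, and $\chi_r(\Gamma')\le k$.

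To close, I would note that the rank number is monotone under subgraphs: restricting any ranking of $\Gamma'$ to $V(G')$ ranks $G'$, so $G'$ admits a $\chi_r(\Gamma')$-ranking and $\chi_r(\Gamma')\ge\chi_r(G')=k$. Combining the two bounds gives $\chi_r(\Gamma')=k=\chi_r(\overrightarrow{P}_{2^k-1})$. The same reasoning applies verbatim if the pieces are attached at distinct vertices $v_{i(t)}$ (as in Proposition \ref{directcaterpillar2}): the only change is that the relevant internal cut vertex on $P$ is whichever attachment vertex $P$ uses to leave the piece containing $x$, and its label $h(i(t))$ again dominates $c$.

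The step I expect to be the main obstacle is the different-pieces case: one must simultaneously guarantee that the attachment vertex occurs as an \emph{internal} vertex of the minimal violating path (so its large label $h(i)$ can be used) and that the common endpoint label satisfies $c\le h(i)$. This is precisely where both hypotheses $\chi_r(N_t)=h(i)$ and $f_t(v_i)=h(i)$ enter. The minimal-counterexample device together with the observation that a simple directed path cannot re-enter a piece through its unique cut vertex are what make this case close cleanly.
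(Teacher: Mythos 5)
Your proof is correct and follows essentially the same route as the paper's: both paste the minimal ranking of $G'$ together with the given rankings $f_t$ and use the fact that the attachment vertex carries the maximum label $h(i)$ of $N_t$ (agreeing with $f(v_i)$) to conclude that no violating path can cross between pieces. Your write-up is merely a more careful elaboration of the paper's brief argument --- adding the minimal-counterexample device, the cut-vertex observation, and the explicit subgraph lower bound $\chi_r(\Gamma')\ge\chi_r(G')$ --- but the underlying idea is identical.
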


\begin{proof} Since the vertex $v_i = G' \cap N_t$ has label $h(i)=f_t(v_i)=f(v_i)$ where $f_t$ and $f$ are the ranking functions of $N_t$ and $G'$, respectively, every other vertex of $N_t$
has label less than $h(i)$. So, attaching $N_t$ to $G'$ does not increase the rank number of $\Gamma'$. Since this argument is true for every
$t \in \{2,4, \ldots, 2^k-2  \}$,  it proves that  $\chi_r( \Gamma')= \chi_r(\overrightarrow{P}_{2^{k-1}})$.
\end{proof}

\end{document}